\newtheoremstyle{theorem}{6pt}{6pt}{\itshape}{}{\bfseries}{.}{.5em}{}
\newtheoremstyle{definition}{6pt}{6pt}{\upshape}{}{\bfseries}{.}{.5em}{}
\theoremstyle{theorem}
\newtheorem{theorem}{Theorem}[section]
\newaliascnt{corollary}{theorem}
\newaliascnt{lemma}{theorem}
\newtheorem{lemma}[lemma]{Lemma}
\newaliascnt{sublemma}{theorem}
\theoremstyle{definition}
\newtheorem{remark}{Remark}[section]
\newaliascnt{proposition}{theorem}
\newtheorem{proposition}[proposition]{Proposition}
\newcommand{\R}{{\mathbb R}}
\newcommand{\dif}{{\mathrm d}}
\numberwithin{equation}{section}
\begin{document}

\begin{frontmatter}

\title{Global Strong Solutions to the Cauchy Problem of Three-dimensional Isentropic Magnetohydrodynamics Equations with Large Initial Data}

\author[label1]{Yachun Li}
\address[label1]{School of Mathematical Sciences, MOE-LSC, and SHL-MAC, Shanghai Jiao Tong University, Shanghai 200240, P.R.China;}
\ead{ycli@sjtu.edu.cn}

\author[label2]{Peng Lu}
\address[label2]{School of Mathematical Sciences, Shanghai Jiao Tong University, Shanghai 200240, P.R.China;}
\ead{lp95@sjtu.edu.cn}

\author[label3]{Zhaoyang Shang\corref{cor1}}
\address[label3]{School of Finance, Shanghai Lixin University of  Accounting and Finance, Shanghai 201209, P.R.China;}

\cortext[cor1]{Corresponding author.}
\ead{shangzhaoyang@sjtu.edu.cn}

\begin{abstract}
We consider the Cauchy problem to the three-dimensional isentropic compressible Magnetohydrodynamics (MHD) system with density-dependent viscosities. 
When the initial density is linearly equivalent to a large constant state, we prove that strong solutions exist globally in time, and there is  no restriction on the size of the initial velocity and initial magnetic field. As far as we know, this is the first result on the global well-posedness of density-dependent viscosities with large initial data for 3D compressible MHD equations.
\end{abstract}

\begin{keyword}
%% keywords here, in the form: keyword \sep keyword
MHD equations; global strong solution; density-dependent viscosities; large initial data.
%% MSC codes here, in the form: \MSC code \sep code
%% or \MSC[2008] code \sep code (2000 is the default)
\end{keyword}

\end{frontmatter}

%%
%% Start line numbering here if you want
%%
% \linenumbers

%% main text
\section{Introduction}
In this paper, we consider the following isentropic compressible  MHD equations:
\begin{equation}\label{1.1}
\begin{cases}
\displaystyle
\rho_t+\text{div}(\rho{{u}})=0 , \\[8pt]
\displaystyle
(\rho{{u}})_t+\text{div}(\rho{{u}}\otimes{{u}})+\nabla{P}=(\nabla\times{H})\times{H}+\text{div}\mathbb{T} ,  \\[8pt]
\displaystyle
{H}_t-\nabla{\times(u\times{H})=-\nabla\times(\nu\nabla\times{H}}),\quad \mathrm{div}{H}=0,
\end{cases} 
\end{equation}
where $x=(x_1,x_2,x_3)\in\R^3$ and $t\geq 0$ denote the spatial coordinate and time coordinate, $\rho$ denotes the density, $u=(u^1, u^2, u^3)$ the velocity, $H=(H^1, H^2, H^3)$ the magnetic field, $P(\rho)=\rho^\gamma$ the pressure, and $\nu$ the magnetic diffusion coefficient of the magnetic field. $\mathbb{T}$ is the viscosity stress tensor given by
$\mathbb{T}=2\mu\mathcal{D}(u)+\lambda\mathrm{div}u\mathbb{I}_3,$ where $\mathcal{D}(u)=\frac{1}{2}\left(\nabla u+(\nabla u)^\top\right)$ is the deformation tensor, $\mathbb{I}_3$ is the $3 \times 3$ identity matrix, $\mu$ is the shear viscosity coefficient, and $\lambda+\frac{2}{3}\mu$ is the bulk viscosity coefficient.

The compressible MHD equations (\ref{1.1}) describe the motion of a compressible viscous flow in the magnetic field. The issues of well-posedness and dynamical behaviors of MHD system have received wide attention in recent decades.
In three-dimensional space, the local existence of strong
solutions was obtained by Vol’pert-Hudjaev \cite{MR390528}, in 1972, for
the Cauchy problem with large initial data and the initial density being strictly positive. In 1984, the  first small smooth solution
was constructed by by Kawashima \cite{1984Systems} when the initial data are taken to be
close to a constant state in $H^3(\mathbb{R}^3)$. In 2010, Hu-Wang \cite{MR2646819} considered the initial-boundary value problem in a
bounded domain with large data and established the existence and large-time behavior of global
weak solutions for the adiabatic exponent $\gamma>\frac{3}{2}$. Later, in 2012, Suen-Hoff \cite{MR2927617} proved the global-in-time existence of weak solutions with initial data
small in $L^2(\mathbb{R}^3)$ and initial density positive and essentially bounded. Moreover, the authors concluded that small-energy smooth solutions described in \cite{1984Systems} are dense in the set of small-energy weak solutions. In 2013, Li-Xu-Zhang \cite{MR3056749}  considered the Cauchy problem for regular initial data with small energy but possibly large oscillations, and proved the global well-posedness of classical solution, where the density is allowed to contain vacuum states. The initial-boundary-value problem with small energy can be found in recent work \cite{MR4580966} given by Chen-Huang-Peng-Shi. For the results of well-posedness  in two-dimensional space,
we refer to \cite{MR0839315,MR3528824,MR3317636,MR3620698}, and the references cited therein.

Recently, Yu \cite{Yummas} considered the Cauchy problem to the 3D compressible Navier–Stokes system with the following density-dependent viscosities
\begin{equation}\label{viscosity}
\mu(\rho)=\bar{\mu}\rho^\alpha, \quad \lambda(\rho)=\bar{\lambda}\rho^\alpha,
\end{equation}
where
%\begin{equation}
$\frac{4}{3}\leq\gamma\leq\alpha\leq\frac{5}{3}, \alpha+4\gamma>7,\alpha+\gamma \leq 3.$
%\end{equation}
When the initial density is sufficiently large, the author proved that strong solutions exist globally in time. 
Motivated by Yu's work \cite{Yummas}, 
in this paper we study the global existence of large strong solutions to the Cauchy problem (\ref{1.1}) with initial data 
\begin{align}\label{initial}
(\rho, u, H)(x,t)|_{t=0}=(\rho_0, u_0, H_0)(x), \quad x\in\mathbb{R}^3,
\end{align}
when magnetic diffusion $\nu$ is a constant, viscosities $\mu$ and $\lambda$ satisfy (\ref{viscosity}).
% \begin{equation}\label{viscosity}
% \mu(\rho)=\bar{\mu}\rho^\alpha, \quad \lambda(\rho)=\bar{\lambda}\rho^\alpha.
% \end{equation}
% where $\bar{\mu}, \bar{\lambda}$ and $\alpha$ are constants satisfying
% %\begin{equation}\label{constants}
% $\bar{\mu}>0, ~2\bar{\mu}+3\bar{\lambda}\geq 0, ~\alpha>0.$
% %\end{equation}
The far field behavior is given by
\begin{align}\label{far}
(\rho, u, H)(x,t)\to (\tilde{\rho}, 0, 0), \quad \text{as} \quad |x|\to\infty, ~ t>0.
\end{align}
% where $(\bar{\mu}, \bar{\lambda})$ and $\alpha$ are constants satisfying
% %\begin{equation}\label{constants}
% $\bar{\mu}>0, ~2\bar{\mu}+3\bar{\lambda}\geq 0, ~\alpha>0.$
% %\end{equation}

Throughout this paper, for convenience, we denote
$$\displaystyle L^p=L^p(\mathbb{R}^3), ~ W^{k,p}=W^{k,p}(\mathbb{R}^3), ~ H^k=W^{k,2}, ~ D^{k,p}=\left\{f\in L^1_{loc}(\mathbb{R}^3) ~ | ~ \|\nabla^k u\|_{L^p}<\infty\right\},$$
and
$$\int f\dif x=\int_{\mathbb{R}^3}f\dif x$$
for any function $f$. Moreover, we use $C$ and $C_i$ ($i\in\mathbb{N}$) to denote a generic positive constant depending on $\nu$, $\bar{\lambda}$, $\bar{\mu}$, $\rho_0-\tilde{\rho}$, $u_0$ and $H_0$.\\

Our main theorem can be stated as follows.
\begin{theorem}\label{main Thm}
Let $\tilde{\rho}>1$ be a constant,  $1<\gamma\leq 3$, and $\alpha>\frac{3}{2}\gamma-\frac{1}{2}$. Suppose $(\rho_0,u_0,H_0)$ satisfy
\begin{align}\label{data}
\frac{3}{4}\tilde{\rho}\leq\rho_0(x)\leq\frac{5}{4}\tilde{\rho}, \quad \rho_0(x)-\tilde{\rho}\in H^1\cap D^{1,4}, \quad (u_0(x), H_0(x))\in H^2, \quad \mathrm{div}H_0=0,
\end{align}
and there exists a positive constant $M$ depending on $\gamma$, $\alpha$, $\bar{\lambda}$, $\bar{\mu}$, $\rho_0-\tilde{\rho}$, $u_0$ and $H_0$ such that $\tilde{\rho}\geq M$.
Then the Cauchy problem \eqref{1.1}, \eqref{initial}--\eqref{far} admits a global strong solution $(\rho, u, H)$ defined on $\mathbb{R}^3\times[0,\infty)$ satisfying
$$\frac{2}{3}\tilde{\rho}\leq\rho(x,t)\leq\frac{4}{3}\tilde{\rho}, \quad \text{for} \quad x\in\mathbb{R}^3, ~ t\geq 0,$$
and
$$
\begin{cases}
\rho-\tilde{\rho}\in C([0,\infty); H^1\cap D^{1,4}), \\
u\in C([0,\infty); H^2) \cap L^2(0,\infty; D^{2,4}), \quad H\in C([0,\infty); H^2), \\
(u_t, H_t)\in L^\infty(0,\infty; L^2)\cap L^2(0,\infty; H^1).
\end{cases}
$$
\end{theorem}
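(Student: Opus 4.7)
The proof proceeds by the standard local-existence + a priori estimates + continuation scheme. Local-in-time strong solutions in the class described by the theorem exist by adapting the classical Vol'pert--Hudjaev construction under the initial condition $\tfrac{3}{4}\tilde{\rho}\le\rho_0\le\tfrac{5}{4}\tilde{\rho}$; since $\rho_0$ is bounded away from vacuum, the viscosities $\mu(\rho_0),\lambda(\rho_0)$ are uniformly elliptic, so $(\rho,u,H)$ exists on some maximal $[0,T^*)$. The plan is to run a bootstrap on the density bounds: assume on $[0,T]$ the enlarged bound $\tfrac{2}{3}\tilde{\rho}\le\rho\le\tfrac{4}{3}\tilde{\rho}$ and derive closed $H^1\cap D^{1,4}$ estimates on $\rho-\tilde{\rho}$ and $H^2$ estimates on $(u,H)$ whose constants are uniform in $T$ once $\tilde{\rho}\ge M$, then improve the density bound back to $\tfrac{3}{4}\tilde{\rho}\le\rho\le\tfrac{5}{4}\tilde{\rho}$. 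Contradiction with maximality then forces $T^*=\infty$.

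\textbf{Low-order estimates.} First I would test the momentum equation with $u$ and the induction equation with $H$, obtaining the basic energy identity that controls $\tfrac{1}{2}\|\sqrt{\rho}u\|_{L^2}^2+\tfrac{1}{2}\|H\|_{L^2}^2+\int(\rho^\gamma-\tilde{\rho}^\gamma-\gamma\tilde{\rho}^{\gamma-1}(\rho-\tilde{\rho}))\,dx$ plus the dissipation $\int_0^T\!\int(2\mu(\rho)|\mathcal{D}(u)|^2+\lambda(\rho)(\mathrm{div}\,u)^2+\nu|\nabla H|^2)\,dx\,dt$. Under the bootstrap bound on $\rho$ the viscosities are comparable to $\tilde{\rho}^\alpha$, so this already gives a factor $\tilde{\rho}^\alpha$ on $\|\nabla u\|_{L^2}^2$. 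Next, following the Navier--Stokes template of Yu, I would introduce the effective viscous flux $F=(2\mu(\rho)+\lambda(\rho))\mathrm{div}\,u-(P(\rho)-P(\tilde{\rho}))-\tfrac{1}{2}|H|^2$ (the magnetic pressure is natural here because $(\nabla\times H)\times H=\mathrm{div}(H\otimes H)-\nabla\tfrac{|H|^2}{2}$). Rewriting the continuity equation as a transport equation for $\rho-\tilde{\rho}$ driven by $F$, one obtains a pointwise ODE bound whose right-hand side scales as a negative power of $\tilde{\rho}$ --- this is where the largeness of $M$ enters to keep $\rho$ inside $[\tfrac{3}{4}\tilde{\rho},\tfrac{5}{4}\tilde{\rho}]$.

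\textbf{Higher-order estimates.} I would then test the momentum equation with $u_t$ (after multiplying by an appropriate power of $\rho$) and the induction equation with $H_t$, together producing $\frac{d}{dt}(\|\sqrt{\mu(\rho)}\nabla u\|_{L^2}^2+\nu\|\nabla H\|_{L^2}^2)+\|\sqrt{\rho}u_t\|_{L^2}^2+\|H_t\|_{L^2}^2\le\cdots$. The dangerous terms on the right are (i) $\int\nabla P\cdot u_t$, which after integration by parts reduces to $\gamma\tilde{\rho}^{\gamma-1}\|\mathrm{div}\,u\|_{L^2}$ type quantities; and (ii) the magnetic nonlinearities $\int(\nabla\times H)\times H\cdot u_t$ and $\int\nabla\times(u\times H)\cdot H_t$. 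These are controlled using Sobolev embedding combined with the $L^\infty$-bound on $\rho$, and the key algebra is to check that the unfavorable powers of $\tilde{\rho}$ generated by $P$ and by the magnetic terms are strictly dominated by $\tilde{\rho}^\alpha$ coming from the viscous dissipation; this is exactly the role of the assumption $\alpha>\tfrac{3}{2}\gamma-\tfrac{1}{2}$. Once $\|\nabla u\|_{H^1}$ and $\|\nabla H\|_{H^1}$ together with $\|u_t\|_{L^2},\|H_t\|_{L^2}$ are bounded, elliptic regularity for the Lam\'e system with variable coefficients $\mu(\rho),\lambda(\rho)$ yields the $D^{2,4}$-estimate on $u$ and transport-type estimates yield the $D^{1,4}$-estimate on $\rho-\tilde{\rho}$.

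\textbf{Main obstacle.} The principal difficulty, compared with the pure Navier--Stokes setting of Yu, is the two-way coupling with $H$: the Lorentz force $(\nabla\times H)\times H$ behaves like a quadratic forcing in the momentum equation while $\nabla\times(u\times H)$ in the induction equation contains $\nabla u$, so $\nabla u$ and $\nabla H$ are entangled in every higher-order estimate. The plan is to handle them simultaneously in a single Gronwall inequality for the sum $\|\sqrt{\mu(\rho)}\nabla u\|_{L^2}^2+\|\nabla H\|_{L^2}^2$ and to track the $\tilde{\rho}$-weights carefully so that the interaction terms are of lower order in $\tilde{\rho}$ than the diffusion; the precise weight bookkeeping (verifying at each inequality that no exponent of $\tilde{\rho}$ exceeds the exponent absorbed by $\mu(\rho)\sim\tilde{\rho}^\alpha$ or $\nu$) is the most delicate step, and it is exactly this bookkeeping that pins down the scaling condition $\alpha>\tfrac{3}{2}\gamma-\tfrac{1}{2}$ and the existence of the threshold $M$.
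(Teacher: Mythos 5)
Your skeleton (local existence, a bootstrap, the effective viscous flux, a continuation argument) matches the paper's at the coarsest level, but the actual mathematical content of the theorem --- the bookkeeping of powers of $\tilde{\rho}$ --- is precisely the part you defer, and the concrete choices you do make point in directions that would not close. The paper's bootstrap is not on the density bounds alone: it is on four weighted quantities (Proposition \ref{p}), namely $A_1\leq 3\tilde{\rho}^{\alpha+\frac{3}{4}\gamma-\frac{3}{4}+2\delta}$ for $\tilde{\rho}^{\alpha}\|\nabla u\|_{L^2}^2+\int_0^T\|\sqrt{\rho}u_t\|_{L^2}^2$, $A_3\leq 3\tilde{\rho}^{2\alpha-1+\delta}$ for the time derivatives, $A_4\leq 3\tilde{\rho}^{2+\delta}$ for $\|\nabla\rho\|_{L^2\cap L^4}$, and --- crucially --- $A_2\leq 3\|\nabla H_0\|_{L^2}^2$, i.e.\ the magnetic field is bounded in $H^1$ \emph{uniformly in} $\tilde{\rho}$. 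That uniformity is obtained by Gronwall arguments exploiting $\int_0^T\|\nabla u\|_{L^2}^2\,\dif t\lesssim \tilde{\rho}^{1-\alpha}$ together with $\alpha>\frac{3}{2}\gamma-\frac{1}{2}$ (Lemmas \ref{lem:2.1} and \ref{lem:3.4}), and it is exactly what makes the magnetic terms harmless in the density estimates of Lemma \ref{D:rho} and in the $u_t$ estimate of Lemma \ref{lemma:2.5}. Your plan to run ``a single Gronwall inequality for the sum $\|\sqrt{\mu(\rho)}\nabla u\|_{L^2}^2+\|\nabla H\|_{L^2}^2$'' assigns $u$ and $H$ the same $\tilde{\rho}$-weight and so destroys this asymmetry: if $\|\nabla H\|_{L^2}^2$ is only known at the size of the $\tilde{\rho}^{\alpha}$-weighted velocity energy, then terms such as $\tilde{\rho}\|H\cdot\nabla H\|_{L^r}\|\nabla\rho\|_{L^r}^{r-1}$ in \eqref{lr} and the Lorentz terms $J_7+J_8$ are no longer dominated by the dissipation, and neither $A_4$ nor the $L^\infty$ bound on $\rho$ can be recovered.

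Your mechanism for the $L^\infty$ density bound is also different from the paper's and is only asserted. The paper does not use a pointwise ODE along particle paths driven by $F$; it closes the loop by interpolation, $\|\rho-\tilde{\rho}\|_{L^\infty}\leq C\|\rho-\tilde{\rho}\|_{L^2}^{\frac{1}{7}}\|\nabla\rho\|_{L^4}^{\frac{6}{7}}\leq C\tilde{\rho}^{\frac{15-\gamma+6\delta}{14}}\leq\frac{\tilde{\rho}}{3}$, and this is where both hypotheses on $\gamma$ are consumed: $\gamma\leq 3$ gives the basic energy bound $\sup_t\|G(\rho)\|_{L^1}\lesssim\tilde{\rho}$, hence $\|\rho-\tilde{\rho}\|_{L^2}^2\lesssim\tilde{\rho}^{3-\gamma}$ (Lemma \ref{lem:2.1}), and $\gamma>1$ makes the resulting exponent strictly less than $1$. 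Your sketch never identifies where $\gamma\leq 3$ enters. A trajectory (Zlotnik-type) argument in its place would need uniform-in-$T$ control of $\int_0^T\left(\|F\|_{L^\infty}+\||H|^2\|_{L^\infty}\right)\dif t$ measured against $\tilde{\rho}^{\alpha}$, which you have not supplied and which would require essentially the same weighted estimates on $\nabla F$, $\nabla^2 u$, $\nabla^2 H$ (Lemmas \ref{lem:3.3} and \ref{D:rho}) that you postpone as ``bookkeeping.'' So while nothing in your outline is irreparably wrong in spirit, the proposal omits the bootstrap ansatz, the uniform magnetic estimate, and the derivation of the constraints $1<\gamma\leq 3$ and $\alpha>\frac{3}{2}\gamma-\frac{1}{2}$ --- that is, the parts that constitute the proof.
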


\begin{remark}
The main contribution of this paper is we establish the global existence of strong solutions to the Cauchy problem of 3D isentropic MHD equations with large initial data. In addition, the strong solutions are uniform in time in the above solution spaces and uniform upper bounds of the solution are given.
\end{remark}

\begin{remark}
When $H=0$ in (\ref{1.1}), our results in Theorem \ref{main Thm} reduce to the global well-posedness of 3D isentropic Navier-Stokes equations with large initial data. Compared with the results in \cite{Yummas}, in Theorem \ref{main Thm}, we refine the restrictions on the parameters of $\alpha$, $\gamma$ and there is no small assumptions on the initial kinetic energy
%\begin{align*}
$\left\|\sqrt{\rho_0} u_{t 0}\right\|_{L^2}^2 \leq  C\tilde{\rho}$
%\end{align*}
 (see Lemma 3.4, \cite{Yummas}). More precisely, for isentropic Navier-Stokes equations, Theorem \ref{main Thm} can be proved provided $1<\gamma\leq 3$ and $\alpha>\frac{3}{8}\gamma+\frac{5}{8}$, where $\gamma\leq 3$ is used to give the sharp $L^2$-energy estimate in term of $\tilde{\rho}$. Moreover, when the viscosities are given in the form of  (\ref{viscosity}) and the density is large, the initial kinetic energy is equivalent to $\tilde{\rho}^{2\alpha-1}$, actually. For the general case of $\gamma>3$, we left the proof for the interested reader. When $H\neq0$, the effects of magnetic field need to be considered additionally, in this paper consider the constant magnetic diffusion and only use large dissipation property of viscosities to control the increase of magnetic field energy, where some restrictions on the interval of parameters $\alpha$ and $\gamma$ are required further, $\alpha>\frac{3}{2}\gamma-\frac{1}{2}$ for example. It is fortune for us to find that the interval of parameters is not empty set, whereas in \cite{Yummas} the effective interval length is only $\frac{1}{3}$. However, due to the interaction between the density and magnetic field, during the process of the first-order derivative estimates on the density, magnetic field plays an important role in our analysis. In order to overcome the key difficulties caused by the  magnetic field, some new uniform estimates are given in this paper. In fact, the  magnetic field can be controlled by a constant independent of $\tilde{\rho}$ in $H^1$ Sobolev space, which is different from the $\tilde{\rho}$-dependent estimates on the velocity and very important in our analysis. As far as we know, this is the first result on the uniform estimates of the density in $H^1$ Sobolev space and global strong solutions to the 3D compressible MHD equations with density-dependent viscosities and large initial data.
\end{remark}
% According to \cite{MR390528}, there exists a positive time $T_* > 0$ such that the Cauchy problem \eqref{1.1}--\eqref{far} has a strong solution $(\rho,u,H)$ in $\mathbb{R}^3 \times (0, T_*]$. So, to prove Theorem \ref{main Thm}, it suffices to show that the local solution can be extended to be a global one.

% The rest of this paper is organized as follows. In \S2, we establish the uniform a priori estimates of the solution  $(\rho(t,x), u(t,x), H(t,x))$.  In \S3, based on the a priori estimates and continuation argument, we extend the local solution to a global one.

\section{Uniform a priori estimates}
In this section, we establish some uniform a priori estimates for the local-in-time solution on $\R^3 \times (0,T]$. First, we take $A_i(T)$ as follows:
\begin{equation}\label{3.01}
A_1(T)=\left(\frac{\tilde{\rho}}{2}\right)^\alpha\sup_{0\leq t\leq T}\|\nabla{u}\|^2_{L^2}+\int_0^T\|\sqrt\rho{u_t}\|^2_{L^2}\dif t,\quad
A_2(T)=\sup_{0\leq t\leq T}\|\nabla{H}\|^2_{L^2}+\int_0^T\|{H_t}\|^2_{L^2}\dif t,
\end{equation}
\begin{equation}\label{3.02}
A_3(T)=\sup_{0\leq t\leq T}\left(\|\sqrt\rho u_t\|^2_{L^2}+\| H_t\|^2_{L^2}\right)+\int_0^T\left(\tilde{\rho}^\alpha\|\nabla u_t\|^2_{L^2}+\|\nabla H_t\|^2_{L^2}\right)\dif t.
\end{equation}
\begin{equation}\label{3.03}
A_4(T)=\sup_{0\leq t\leq T}\left(\|\nabla \rho\|_{L^2}^2+\|\nabla \rho\|_{L^4}^2\right)+\tilde{\rho}^{\gamma-\alpha}\int_0^T\left(\|\nabla \rho\|_{L^2}^2+\|\nabla \rho\|_{L^4}^2\right)\dif t.
\end{equation}

\begin{proposition}\label{p}
Assume that the solution $(\rho, u, H)$ satisfy
\begin{equation}\label{3.09}
A_1(T)\leq 3\tilde{\rho}^{\alpha+\frac{3}{4}\gamma-\frac{3}{4}+2\delta}, \; A_2(T)\leq 3\|\nabla H_0\|^2_{L^2}, \; A_3(T)\leq 3\tilde{\rho}^{2\alpha-1+\delta}, \; A_4(T)\leq 3\tilde{\rho}^{2+\delta}, \; \|\rho-\tilde{\rho}\|_{L^\infty}\leq\frac{\tilde{\rho}}{2},
\end{equation}
for any $(x,t)\in {{\mathbb{R}}}^3\times[0,T]$, then
\begin{equation}\label{3.011}
A_1(T)\leq 2\tilde{\rho}^{\alpha+\frac{3}{4}\gamma-\frac{3}{4}+2\delta}, \; A_2(T)\leq 2\|\nabla H_0\|^2_{L^2}, \; A_3(T)\leq 2\tilde{\rho}^{2\alpha-1+\delta}, \; A_4(T)\leq 2\tilde{\rho}^{2+\delta}, \; \|\rho-\tilde{\rho}\|_{L^\infty}\leq\frac{\tilde{\rho}}{3},
\end{equation}
provided $\tilde{\rho}\geq M$, where $1<\gamma\leq3$, $\alpha>\frac{3}{2}\gamma-\frac{1}{2}$, $0<\delta<\frac{1}{16}\min\{\gamma-1, ~ 2\alpha+1-3\gamma\}$.
\end{proposition}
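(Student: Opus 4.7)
The plan is a continuity bootstrap: assume (2.4) on the maximal interval $[0,T]$ where it holds, derive the stronger bounds (2.5) on that same interval, and conclude that $T$ can be extended. The key point is that the tightening of the prefactors from $3$ to $2$ is not obtained from initial-data smallness, but from the negative powers of $\tilde\rho$ that appear on the right-hand side of every estimate once (2.4) is substituted back; large density plays the role usually played by small data.

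The estimates should be carried out in a specific order. First, a basic $L^2$-energy identity for $(u,H)$, in which $1<\gamma\le 3$ is used to absorb the pressure difference $P(\rho)-P(\tilde\rho)$ sharply in terms of $\tilde\rho^{\gamma-1}$. Second, the estimate on $A_2$: testing (1.1)$_3$ by $-\Delta H$, the coupling $\nabla\times(u\times H)$ is dominated by norms of $u$ that carry the favourable factor $\tilde\rho^{-\alpha/2}$ inherited from $A_1$; because this factor outweighs the positive $\tilde\rho$ growth allowed by $A_1$ precisely when $\alpha>\tfrac32\gamma-\tfrac12$, the resulting bound on $A_2$ is \emph{independent} of $\tilde\rho$ and controlled solely by $\|\nabla H_0\|_{L^2}^2$. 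Third, the $A_1$-estimate: test (1.1)$_2$ by $u_t$; the magnetic forcing $(\nabla\times H)\times H$ is now handled in $L^2$ via $\|H\|_{L^\infty}\|\nabla H\|_{L^2}$, both $\tilde\rho$-independent after Step~2, while the pressure contribution is controlled by $A_4$. Fourth, the $A_3$-estimate, obtained by differentiating (1.1)$_2$ and (1.1)$_3$ in $t$ and testing by $u_t,H_t$, with cross terms absorbed into the large dissipation $\tilde\rho^\alpha\|\nabla u_t\|_{L^2}^2$ and $\nu\|\nabla H_t\|_{L^2}^2$. Fifth, the $A_4$-estimate: apply $\nabla$ to the continuity equation, test by $|\nabla\rho|^{p-2}\nabla\rho$ for $p=2,4$, recover $\nabla^2u$ from the elliptic structure of the momentum equation via the effective viscous flux, and control the magnetic contribution using $A_2$. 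Finally, the $L^\infty$-bound on $\rho-\tilde\rho$ follows from the transport structure of the continuity equation along characteristics, using $\int_0^T\|\mathrm{div}\,u\|_{L^\infty}\,\dif t$ interpolated between $A_1$ and $A_3$.

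The main obstacle is the magnetic coupling. Unlike the Navier--Stokes situation treated by Yu, the forcing $(\nabla\times H)\times H$ does not decouple from $u$, and its derivatives enter as forcing in the velocity estimates ($A_1,A_3$) as well as in the density gradient estimates ($A_4$). The scheme only closes because $\|\nabla H\|_{L^2}$ can be bounded \emph{independently} of $\tilde\rho$, which is why the $A_2$-estimate must be obtained \emph{before} the $A_1,A_3,A_4$ estimates; the large viscous dissipation $\tilde\rho^\alpha\|\nabla u\|_{L^2}^2$ then pays for every magnetic contribution. The parameter restrictions $\alpha>\tfrac32\gamma-\tfrac12$, $1<\gamma\le 3$, and $0<\delta<\tfrac{1}{16}\min\{\gamma-1,\,2\alpha+1-3\gamma\}$ are precisely calibrated so that, at every step, the resulting exponent of $\tilde\rho$ is strictly less than the corresponding target exponent declared in (2.5); this strict inequality together with $\tilde\rho\ge M$ supplies the factor $2$ in place of $3$, closing the bootstrap.
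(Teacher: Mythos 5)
Your overall scheme coincides with the paper's: a bootstrap in which every estimate is performed under the a priori bounds \eqref{3.09}, the magnetic quantities are shown to be bounded independently of $\tilde\rho$ so that the large dissipation $\tilde\rho^{\alpha}\|\nabla u\|_{L^2}^2$ pays for all couplings, the density gradient is handled through the effective viscous flux, and the constants close because every right-hand side carries a $\tilde\rho$-exponent strictly below the target one, so $\tilde\rho\ge M$ converts $3$ into $2$. Two small discrepancies are harmless: the paper tests the induction equation by $H_t$ (with a separate elliptic estimate for $\|\nabla^2H\|_{L^2}$) rather than by $-\Delta H$, and the constraint $\alpha>\tfrac32\gamma-\tfrac12$ actually bites in the $L^2$ bound for $H$ (Gronwall with $\int_0^T\|\nabla u\|_{L^2}^2\,\dif t\le C\tilde\rho^{1-\alpha}$ in Lemma \ref{lem:2.1}), while the $\nabla H$ estimate in Lemma \ref{lem:3.4} only needs $\alpha>\tfrac12\gamma+\tfrac12$.

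The genuine gap is your last step, the bound $\|\rho-\tilde\rho\|_{L^\infty}\le\tilde\rho/3$. You propose to propagate it along characteristics of the continuity equation, which requires $\int_0^T\|\operatorname{div}u\|_{L^\infty}\,\dif t$ to be small \emph{uniformly in} $T$ (on the order of an absolute constant, since $\rho$ is multiplied by $\exp\{\pm\int_0^t\|\operatorname{div}u\|_{L^\infty}\,\dif s\}$). The available a priori information only controls time integrals of \emph{squares}: interpolating $\|\nabla u\|_{L^\infty}\lesssim\|\nabla u\|_{L^2}^{1/7}\|\nabla^2u\|_{L^4}^{6/7}$ and using $\int_0^T\|\nabla u\|_{L^2}^2\,\dif t\le C\tilde\rho^{1-\alpha}$ together with $\int_0^T\|\nabla^2u\|_{L^4}^2\,\dif t\le C\tilde\rho^{\gamma-\alpha+\delta}$ (the paper's estimate \eqref{p0}) gives $\int_0^T\|\nabla u\|_{L^\infty}^2\,\dif t$ bounded by a negative power of $\tilde\rho$, but passing to the $L^1$-in-time norm costs a factor $T^{1/2}$ by H\"older, so the characteristics argument does not close on $[0,\infty)$ without additional decay that is never established. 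The paper avoids this entirely: it deduces the $L^\infty$ bound at each fixed time from the interpolation $\|\rho-\tilde\rho\|_{L^\infty}\le C\|\rho-\tilde\rho\|_{L^2}^{1/7}\|\nabla\rho\|_{L^4}^{6/7}$, where $\|\rho-\tilde\rho\|_{L^2}^2\le C\tilde\rho^{3-\gamma}$ comes from the $G(\rho)$-energy in Lemma \ref{lem:2.1} (this is where $\gamma\le3$ enters) and $\|\nabla\rho\|_{L^4}^2\le C\tilde\rho^{2+\delta}$ from the $A_4$-estimate \eqref{2.24}; the resulting power $\tilde\rho^{(15-\gamma+6\delta)/14}$ is $o(\tilde\rho)$ precisely because $\delta<\tfrac{1}{16}(\gamma-1)$, so $\tilde\rho\ge M_4$ yields $\|\rho-\tilde\rho\|_{L^\infty}\le\tilde\rho/3$ with no time integrability needed. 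You should replace the characteristics step by this (or an equivalent time-slicewise) argument; the rest of your plan, once the second-order estimates of Lemma \ref{lem:3.3} and the sizes of the initial data $\|\sqrt{\rho_0}u_{t0}\|_{L^2}^2\le C\tilde\rho^{2\alpha-1}$ and $\|H_{t0}\|_{L^2}^2$ are supplied, follows the paper's route.
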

% Then we assume that the solution $(\rho, u, H)$ satisfy
% \begin{equation}\label{3.09}
% A_1(T)\leq 3\tilde{\rho}^{\alpha+\frac{3}{4}\gamma-\frac{3}{4}+2\delta}, \; A_2(T)\leq 3\|\nabla H_0\|^2_{L^2}, \; A_3(T)\leq 3\tilde{\rho}^{2\alpha-1+\delta}, \; A_4(T)\leq 3\tilde{\rho}^{2+\delta}, \; \|\rho-\tilde{\rho}\|_{L^\infty}\leq\frac{\tilde{\rho}}{2},
% \end{equation}
% with a small constant $\delta$ satisfying $0<\delta<\frac{1}{16}\min\{\gamma-1, ~ 2\alpha+1-3\gamma\}$.

\begin{lemma}\label{lem:2.1}
Assume that \eqref{3.09} holds, then there exists a positive constant $M_1$, such that 
\begin{equation}\label{3.1}
\sup_{t\in[0,T]}\left(\|G(\rho)\|_{L^1}+\tilde{\rho}\|u\|_{L^2}^2\right)+{\tilde{\rho}}^\alpha\int_0^T\|\nabla u\|_{L^2}^2\dif t\leq C\tilde{\rho},
\end{equation}
\begin{equation}\label{3.1-1}
\sup_{t\in[0,T]}\|H\|_{L^2}^2+\nu\int_0^T\|\nabla H\|_{L^2}^2\dif t\leq 2\|H_0\|_{L^2}^2,
\end{equation}
provided $\tilde{\rho}\geq M_1$, where  $\displaystyle G(\rho)  \triangleq \rho\int_{\tilde{\rho}}^\rho \frac{P(s)-P(\tilde{\rho})}{s^2}\dif s.$
\end{lemma}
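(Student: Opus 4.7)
The plan is to prove the two inequalities in sequence. For (\ref{3.1}), I would carry out the classical BD-type combined energy estimate. Pairing the continuity equation with $\psi(\rho)=\int_{\tilde{\rho}}^{\rho}(P(s)-P(\tilde{\rho}))/s^2\,\dif s$, the momentum equation with $u$, and the induction equation with $H$, and integrating over $\R^3$, the Lorentz term $u\cdot((\nabla\times H)\times H)$ and the induction coupling $H\cdot\nabla\times(u\times H)$ cancel via the triple-product identity $u\cdot((\nabla\times H)\times H)=-(\nabla\times H)\cdot(u\times H)$ together with integration by parts on the curl. This yields the single identity
\begin{equation*}
\frac{\dif}{\dif t}\int\!\left(G(\rho)+\tfrac{1}{2}\rho|u|^2+\tfrac{1}{2}|H|^2\right)\dif x+\int\!\left(2\mu|\mathcal{D}(u)|^2+\lambda(\mathrm{div}\,u)^2+\nu|\nabla\times H|^2\right)\dif x=0.
\end{equation*}
Since $\rho\geq\tilde{\rho}/2$ by (\ref{3.09}), the dissipation dominates $C\tilde{\rho}^\alpha\|\nabla u\|_{L^2}^2+\nu\|\nabla\times H\|_{L^2}^2$ (using $\int 2|\mathcal{D}(u)|^2\,\dif x=\|\nabla u\|_{L^2}^2+\|\mathrm{div}\,u\|_{L^2}^2$ for decaying fields). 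A Taylor expansion of $G$ around $\tilde{\rho}$ gives $G(\rho)\leq C\tilde{\rho}^{\gamma-2}(\rho-\tilde{\rho})^2$ on $\{|\rho-\tilde{\rho}|\leq\tilde{\rho}/2\}$, so $\|G(\rho_0)\|_{L^1}\leq C\tilde{\rho}^{\gamma-2}\|\rho_0-\tilde{\rho}\|_{L^2}^2\leq C\tilde{\rho}$ using $\gamma\leq 3$; together with $\tfrac{1}{2}\|\sqrt{\rho_0}\,u_0\|_{L^2}^2\leq C\tilde{\rho}$ and $\|H_0\|_{L^2}^2\leq C$, integrating in time produces (\ref{3.1}).

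The combined estimate only yields $\sup_t\|H\|_{L^2}^2\leq C\tilde{\rho}$, which is far from the $\tilde{\rho}$-independent bound $2\|H_0\|_{L^2}^2$ required in (\ref{3.1-1}). I would therefore handle the induction equation in isolation: multiplying $(\ref{1.1})_3$ by $H$ and integrating gives
\begin{equation*}
\frac{1}{2}\frac{\dif}{\dif t}\|H\|_{L^2}^2+\nu\|\nabla\times H\|_{L^2}^2=\int(u\times H)\cdot(\nabla\times H)\,\dif x.
\end{equation*}
Using H\"older, the Sobolev embeddings $\|u\|_{L^6}\leq C\|\nabla u\|_{L^2}$ and $\|H\|_{L^3}\leq\|H\|_{L^2}^{1/2}\|H\|_{L^6}^{1/2}\leq C\|H\|_{L^2}^{1/2}\|\nabla H\|_{L^2}^{1/2}$, the identity $\|\nabla H\|_{L^2}=\|\nabla\times H\|_{L^2}$ (from $\mathrm{div}\,H=0$), and Young's inequality, the right-hand side is absorbed as $\tfrac{\nu}{2}\|\nabla\times H\|_{L^2}^2+C\|\nabla u\|_{L^2}^4\|H\|_{L^2}^2$. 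Gronwall then yields
\begin{equation*}
\|H(t)\|_{L^2}^2\leq\|H_0\|_{L^2}^2\exp\!\Bigl(C\!\int_0^T\|\nabla u\|_{L^2}^4\,\dif t\Bigr).
\end{equation*}

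The main obstacle is to show this exponent is arbitrarily small for $\tilde{\rho}$ large. Combining (\ref{3.1}), which gives $\int_0^T\|\nabla u\|_{L^2}^2\,\dif t\leq C\tilde{\rho}^{1-\alpha}$, with the $A_1(T)$ bound in (\ref{3.09}), which gives $\sup_t\|\nabla u\|_{L^2}^2\leq C\tilde{\rho}^{\frac{3}{4}\gamma-\frac{3}{4}+2\delta}$, one obtains
\begin{equation*}
\int_0^T\|\nabla u\|_{L^2}^4\,\dif t\leq\sup_t\|\nabla u\|_{L^2}^2\cdot\int_0^T\|\nabla u\|_{L^2}^2\,\dif t\leq C\tilde{\rho}^{\frac{3}{4}\gamma+\frac{1}{4}+2\delta-\alpha}.
\end{equation*}
The restriction $\alpha>\frac{3}{2}\gamma-\frac{1}{2}$ combined with $\delta<(\gamma-1)/16$ ensures $\alpha>\frac{3}{4}\gamma+\frac{1}{4}+2\delta$ (since $\frac{3}{2}\gamma-\frac{1}{2}-\frac{3}{4}\gamma-\frac{1}{4}=\frac{3}{4}(\gamma-1)>0$), so the exponent is strictly negative. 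Selecting $M_1$ sufficiently large makes the exponential factor at most $2$, and the same inequality applied to the dissipation side delivers both bounds in (\ref{3.1-1}). It is precisely this step that motivates the parameter range $\alpha>\frac{3}{2}\gamma-\frac{1}{2}$: it is what allows the magnetic $L^2$-energy to be controlled uniformly in $\tilde{\rho}$, an input that will be essential in all subsequent higher-order estimates.
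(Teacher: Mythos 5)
Your proposal is correct and follows essentially the same route as the paper: the combined basic energy estimate yields \eqref{3.1}, and then a separate $L^2$ estimate of the induction equation plus Gronwall, using $\int_0^T\|\nabla u\|_{L^2}^2\,\dif t\leq C\tilde{\rho}^{1-\alpha}$ from \eqref{3.1} together with the a priori bound on $\sup_t\|\nabla u\|_{L^2}^2$ from \eqref{3.09}, yields \eqref{3.1-1}. The only (harmless) difference is that you integrate the convective term by parts into $\int(u\times H)\cdot(\nabla\times H)\,\dif x$ and obtain the single Gronwall factor $\int_0^T\|\nabla u\|_{L^2}^4\,\dif t\leq C\tilde{\rho}^{\frac{3}{4}\gamma+\frac{1}{4}+2\delta-\alpha}$, whereas the paper bounds two terms and its worst factor is $C_1\tilde{\rho}^{\frac{3}{2}\gamma-\frac{1}{2}+4\delta-\alpha}$; both exponents are negative under the standing assumptions $\alpha>\frac{3}{2}\gamma-\frac{1}{2}$, $0<\delta<\frac{1}{16}\min\{\gamma-1,\,2\alpha+1-3\gamma\}$, so both arguments close.
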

\begin{proof}
First, we multiply $(\ref{1.1})_1$, $(\ref{1.1})_2$ by $G^\prime(\rho)$ and $u$, respectively, and integrate over ${{\mathbb{R}}}^3$, after integration by parts, we obtain
\begin{align}\label{3.2}
&\frac{\dif}{\dif t}\left(\|G(\rho)\|_{L^1}+\frac{1}{2}\|\sqrt{\rho}u\|_{L^2}^2+\frac{1}{2}\|H\|_{L^2}^2\right)+\left(\frac{\tilde{\rho}}{2}\right)^\alpha\|\nabla u\|_{L^2}^2+\|\nabla H\|_{L^2}^2\nonumber\\
\leq~&\frac{\dif}{\dif t}\left(\|G(\rho)\|_{L^1}+\frac{1}{2}\|\sqrt{\rho}u\|_{L^2}^2+\frac{1}{2}\|H\|_{L^2}^2\right)+\int\left(2\rho^\alpha |\mathcal{D}(u)|^2 +\rho^\alpha(\text{div}u)^2 \right)\dif x+\|\nabla H\|_{L^2}^2\nonumber\\
\leq~& 0.
\end{align}
Integrating the above inequality over $(0,T)$, we have
\begin{align}\label{3.3}
\sup_{t\in[0,T]}\left(\|G(\rho)\|_{L^1}+\tilde{\rho}\|u\|_{L^2}^2\right)+{\tilde{\rho}}^\alpha\int_0^T\|\nabla u\|_{L^2}^2\dif t
% \leq~& C\left(\|G(\rho_0)\|_{L^1}+\|\sqrt{\rho_0}u_0\|_{L^2}^2+\|H_0\|_{L^2}^2\right)\nonumber\\
\leq \frac{C}{\tilde{\rho}^{2-\gamma}}\|\rho_0-\tilde{\rho}\|_{L^2}^2+C\tilde{\rho}\leq C\tilde{\rho},
\end{align}
where we have used $G(\rho_0)\sim \tilde{\rho}^{\gamma-2}(\rho_0-\tilde{\rho})^2$, $\gamma\leq 3$, and assumption \eqref{3.09}. Then,  (\ref{3.1}) is proved.

Next, multiplying (\ref{1.1})$_3$ by $H$ and integrating over $\mathbb{R}^3$, we have
\begin{align}\label{3.3-1}
\frac{\dif}{\dif t}\|H\|_{L^2}^2+\nu\|\nabla H\|_{L^2}^2
\leq~& \int |\nabla u||H|^2 \dif x+\int |u||\nabla H||H| \dif x\nonumber\\
\leq~&  \int|\nabla u|^2|H|^2 \dif x+ C\int|u|^2|\nabla H|^2 \dif x\nonumber\\
\leq~& \|\nabla u\|_{L^2}\|H\|_{L^4}^2+ \|u\|_{L^4}\|\nabla H\|_{L^2}\|H\|_{L^4}\nonumber\\
\leq~& C\|\nabla u\|_{L^2}\|H\|_{L^2}^\frac{1}{2}\|\nabla H\|_{L^2}^\frac{3}{2}+C\|u\|_{L^2}^\frac{1}{4}\|\nabla u\|_{L^2}^\frac{3}{4}\|H\|_{L^2}^\frac{1}{4}\|\nabla H\|_{L^2}^\frac{7}{4}\nonumber\\
\leq~& \frac{\nu}{4}\|\nabla H\|_{L^2}^2+ C\|\nabla u\|_{L^2}^4\|H\|_{L^2}^2+C\|u\|_{L^2}^2\|\nabla u\|_{L^2}^6\|H\|_{L^2}^2\nonumber\\
\leq~& \frac{\nu}{4}\|\nabla H\|_{L^2}^2+ C\left(\tilde{\rho}^{\frac{3}{4}\gamma-\frac{3}{4}+2\delta}+\tilde{\rho}^{\frac{3}{2}\gamma-\frac{3}{2}+4\delta}\right)\|\nabla u\|_{L^2}^2\|H\|_{L^2}^2,
\end{align}
which together with Gronwall's inequality gives
\begin{align}\label{3.3-2}
\|H\|_{L^2}^2+\nu\int_{0}^{T}\|\nabla H\|_{L^2}^2\dif t
\leq~&\|H_0\|_{L^2}^2\exp\left\{C\left(\tilde{\rho}^{\frac{3}{4}\gamma-\frac{3}{4}+2\delta}+\tilde{\rho}^{\frac{3}{2}\gamma-\frac{3}{2}+4\delta}\right)\int_{0}^{T}\|\nabla u\|_{L^2}^2\dif t\right\}\nonumber\\
\leq~&\|H_0\|_{L^2}^2\exp\left\{C_1\tilde{\rho}^{\frac{3}{2}\gamma-\frac{1}{2}+4\delta-\alpha}\right\} \nonumber\\
\leq~& 2\|H_0\|_{L^2}^2,
\end{align}
provided that $\alpha>\frac{3}{2}\gamma-\frac{1}{2}$ and $\tilde{\rho}\geq M_1\triangleq\left(\frac{C_1}{\ln 2}\right)^{\frac{2}{2\alpha+1-3\gamma-8\delta}}$, and thus we complete the proof.
\end{proof}

\begin{lemma}\label{lem:3.3}
Assume that \eqref{3.09} holds, then we have 
\begin{equation}\label{3.5}
\sup_{t\in[0,T]}\|\nabla^2 u\|_{L^2}^2\leq C\tilde{\rho}^{\frac{3}{4}\gamma-\frac{3}{4}+6\delta},\quad \int_0^T\|\nabla^2 u\|_{L^2}^2\dif t\leq C\tilde{\rho}^{\gamma-\alpha+\delta}.
\end{equation}
\begin{equation}\label{3.6}
\|\nabla^2 H\|_{L^2}^2\leq C\tilde{\rho}^{2\alpha-1+\delta}, \quad \int_0^T\|\nabla^2 H\|_{L^2}^2\dif t\leq C.
\end{equation}
% \begin{equation}\label{3.6}

% \end{equation}
\begin{proof}
Denote $\mathcal{P}(\rho)=\frac{A\gamma}{\gamma-\alpha}\rho^{\gamma-\alpha}$. We rewrite (\ref{1.1})$_2$ as
\begin{equation*}\label{3.7}
-\Delta u-2\nabla \text{div} u+ \nabla \mathcal{P}(\rho)=\rho^{-\alpha}\left(-\rho u_t -\rho u\cdot \nabla u+ 2\nabla \rho^\alpha\cdot \mathcal {D}(u)+\nabla \rho^\alpha\cdot \text{div} u\mathbb{I}_3+H\cdot\nabla H-\frac{1}{2}\nabla |H|^2\right).
\end{equation*}

According to the standard $L^2$-estimate of elliptic system, Young's inequality, and 
\begin{align}\label{3.8}
\|\nabla^2 u\|_{L^2}&\leq C\|\rho^{-\alpha}\|_{L^\infty}\left(\|\rho u_t\|_{L^2}+\|\rho u\cdot \nabla u\|_{L^2}+\|\rho^{\alpha-1}\nabla\rho\nabla u\|_{L^2}+\|H\cdot\nabla H\|_{L^2}+\| \nabla \mathcal{P}(\rho)\|_{L^2}\right)\nonumber\\
&\leq  \frac{C}{\tilde{\rho}^{\alpha}}\left(\tilde{\rho}^{\frac{1}{2}}\|\sqrt{\rho} u_t\|_{L^2}+\tilde{\rho}\|u\cdot \nabla u\|_{L^2}+\tilde{\rho}^{\alpha-1}\|\nabla\rho\nabla u\|_{L^2}+\|H\cdot\nabla H\|_{L^2}+\tilde{\rho}^{\gamma-1}\| \nabla \rho\|_{L^2}\right)\nonumber\\
 &\leq  C\bigg(\frac{1}{\tilde{\rho}^{\alpha-\frac{1}{2}}}\|\sqrt{\rho} u_t\|_{L^2}+\frac{1}{\tilde{\rho}^{\alpha-1}}\|u\|_{L^6}\|\nabla u\|_{L^3}+\frac{1}{\tilde{\rho}}\|\nabla\rho\|_{L^4}\|\nabla u\|_{L^4}\nonumber\\
 &\quad\quad+\frac{1}{\tilde{\rho}^{\alpha}}\|H\|_{L^6}\|\nabla H\|_{L^3}+\frac{1}{\tilde{\rho}^{\alpha-\gamma+1}}\| \nabla \rho\|_{L^2}\bigg)\nonumber\\
&\leq  C\bigg(\frac{1}{\tilde{\rho}^{\alpha-\frac{1}{2}}}\|\sqrt{\rho} u_t\|_{L^2}+\frac{1}{\tilde{\rho}^{\alpha-1}}\|\nabla u\|_{L^2}^\frac{3}{2}\|\nabla^2 u\|_{L^2}^\frac{1}{2}+\frac{1}{\tilde{\rho}}\|\nabla\rho\|_{L^4}\|\nabla u\|_{L^2}^\frac{1}{4}\|\nabla^2 u\|_{L^2}^\frac{3}{4}\nonumber\\
&\quad\quad+\frac{1}{\tilde{\rho}^{\alpha}}\|\nabla H\|_{L^2}^\frac{3}{2}\|\nabla^2 H\|_{L^2}^\frac{1}{2}+\frac{1}{\tilde{\rho}^{\alpha-\gamma+1}}\| \nabla \rho\|_{L^2}\bigg)\nonumber\\
&\leq  \frac{1}{4}\|\nabla^2 u\|_{L^2}+C\bigg(\frac{1}{\tilde{\rho}^{\alpha-\frac{1}{2}}}\|\sqrt{\rho} u_t\|_{L^2}+\frac{1}{\tilde{\rho}^{2\alpha-2}}\|\nabla u\|_{L^2}^3+\frac{1}{\tilde{\rho}^4}\|\nabla\rho\|_{L^4}^4\|\nabla u\|_{L^2}\nonumber\\
&\quad\quad+\frac{1}{\tilde{\rho}^{\alpha}}\|\nabla H\|_{L^2}^\frac{3}{2}\big(\|H_t\|_{L^2}^\frac{1}{2}+\|\nabla u\|_{L^2}\|\nabla H\|_{L^2}^\frac{1}{2}+\|\nabla H\|_{L^2}^\frac{1}{2}\|\nabla u\|_{L^2}^\frac{1}{4}\|\nabla^2 u\|_{L^2}^\frac{1}{4}\big)+\frac{1}{\tilde{\rho}^{\alpha-\gamma+1}}\| \nabla \rho\|_{L^2}\bigg)\nonumber\\
&\leq \frac{1}{4}\|\nabla^2 u\|_{L^2}+C\bigg(\frac{1}{\tilde{\rho}^{\alpha-\frac{1}{2}}}\|\sqrt{\rho} u_t\|_{L^2}+\frac{1}{\tilde{\rho}^{2\alpha-\frac{3}{4}\gamma-\frac{5}{4}-2\delta}}\|\nabla u\|_{L^2}+\tilde{\rho}^{2\delta}\|\nabla u\|_{L^2}+\frac{1}{\tilde{\rho}^{\alpha}}\|\nabla H\|_{L^2}^\frac{1}{2}\|H_t\|_{L^2}^\frac{1}{2}\nonumber\\
&\quad\quad+\frac{1}{\tilde{\rho}^{\alpha}}\|\nabla u\|_{L^2}+\frac{1}{\tilde{\rho}^{\frac{4}{3}\alpha}}\|\nabla H\|_{L^2}^\frac{2}{3}\|\nabla u\|_{L^2}^\frac{1}{3}+\frac{1}{\tilde{\rho}^{\alpha-\gamma+1}}\| \nabla \rho\|_{L^2}\bigg),
\end{align}
where we have used (\ref{1.1})$_3$ and 
\begin{align}\label{3.9}
\|\nabla^2 H\|_{L^2}&\leq C\left(\|H_t\|_{L^2}+\|u\cdot \nabla H\|_{L^2}+\|\nabla u\cdot  H\|_{L^2}\right)\nonumber\\
&\leq C\left(\|H_t\|_{L^2}+\|u\|_{L^6}\|\nabla H\|_{L^3}+\|H\|_{L^6}\|\nabla u\|_{L^3}\right)\nonumber\\
&\leq C\big(\|H_t\|_{L^2}+\|\nabla u\|_{L^2}\|\nabla H\|_{L^2}^\frac{1}{2}\|\nabla^2 H\|_{L^2}^\frac{1}{2}+\|\nabla H\|_{L^2}\|\nabla u\|_{L^2}^\frac{1}{2}\|\nabla^2 u\|_{L^2}^\frac{1}{2}\big)\nonumber\\
&\leq \frac{1}{4}\|\nabla^2 H\|_{L^2}+ C\big(\|H_t\|_{L^2}+\|\nabla u\|_{L^2}^2\|\nabla H\|_{L^2}+\|\nabla H\|_{L^2}\|\nabla u\|_{L^2}^\frac{1}{2}\|\nabla^2 u\|_{L^2}^\frac{1}{2}\big).
% &\leq  \frac{1}{4}\|\nabla^2 H\|_{L^2}+\frac{1}{4}\|\nabla^2 u\|_{L^2}+C\big(\|H_t\|_{L^2}+\|\nabla H\|_{L^2}\big).
\end{align}
%Adding (\ref{3.8}) and (\ref{3.9}) together, we obtain
%\begin{align}\label{3.10}
%\|\nabla^2 u\|_{L^2}+\|\nabla^2 H\|_{L^2}&\leq C\bigg(\frac{1}{\tilde{\rho}^{\alpha-\frac{1}{2}}}\|\sqrt{\rho} u_t\|_{L^2}+\|H_t\|_{L^2}+\left(\frac{1}{\tilde{\rho}^{2\alpha-2}}+\frac{1}{\tilde{\rho}^{2+2\alpha+2\delta-2\gamma}}\right)\|\nabla u\|_{L^2}\nonumber\\
%&\quad\quad+\left(\frac{1}{\tilde{\rho}^{2\alpha}}+\frac{1}{\tilde{\rho}^{2+2\alpha+2\delta-2\gamma}}\right)\|\nabla H\|_{L^2}+\frac{1}{\tilde{\rho}^{\alpha-\gamma+1}}\| \nabla \rho\|_{L^2}\bigg).
%\end{align}
Then, from (\ref{3.8}), we obtain
\begin{align}\label{3.11}
\|\nabla^2 u\|_{L^2}^2
\leq~& \frac{C}{\tilde{\rho}^{2\alpha-1}}\|\sqrt{\rho} u_t\|_{L^2}^2+C\left(\frac{1}{\tilde{\rho}^{4\alpha-\frac{3}{2}\gamma-\frac{5}{2}-4\delta}}+\tilde{\rho}^{4\delta}+\frac{1}{\tilde{\rho}^{2\alpha}}\right)\|\nabla u\|_{L^2}^2+\frac{C}{\tilde{\rho}^{2\alpha}}\|\nabla H\|_{L^2}\|H_t\|_{L^2}\nonumber\\
&+\frac{C}{\tilde{\rho}^{\frac{8}{3}\alpha}}\|\nabla H\|_{L^2}^\frac{4}{3}\|\nabla u\|_{L^2}^\frac{2}{3}+\frac{C}{\tilde{\rho}^{2(\alpha-\gamma+1)}}\| \nabla \rho\|_{L^2}^2\nonumber\\
\leq~& C\bigg(\tilde{\rho}^{\delta}+\frac{1}{\tilde{\rho}^{4\alpha-\frac{7}{4}-6\delta-\frac{9}{4}\gamma}}+\tilde{\rho}^{\frac{3}{4}\gamma-\frac{3}{4}+6\delta}+\frac{1}{\tilde{\rho}^{2\alpha-\frac{3}{4}\gamma+\frac{3}{4}-2\delta}}+\frac{1}{\tilde{\rho}^{\alpha+\frac{1}{2}-\frac{1}{2}\delta}}+\frac{1}{\tilde{\rho}^{\frac{8}{3}\alpha-\frac{1}{4}(\gamma-1)-\frac{2}{3}\delta}}+\frac{1}{\tilde{\rho}^{2(\alpha-\gamma)-\delta}}\bigg)\nonumber\\
\leq~& C\tilde{\rho}^{\frac{3}{4}\gamma-\frac{3}{4}+6\delta},
\end{align}
provided that $\alpha\geq\frac{3}{8}\gamma+\frac{5}{8}$. Integrating \eqref{3.11} over $(0,T)$, we have
\begin{align*}
\int_0^T\|\nabla^2 u\|_{L^2}^2\dif t
\leq~& C\bigg(\frac{1}{\tilde{\rho}^{2\alpha-1}}\int_0^T\|\sqrt{\rho} u_t\|_{L^2}^2\dif t+\left(\frac{1}{\tilde{\rho}^{4\alpha-\frac{3}{2}\gamma-\frac{5}{2}-4\delta}}+\tilde{\rho}^{4\delta}+\frac{1}{\tilde{\rho}^{2\alpha}}\right)\int_0^T\|\nabla u\|_{L^2}^2\dif t\nonumber\\
&+\frac{1}{\tilde{\rho}^{2\alpha}}\int_0^T\|\nabla H\|_{L^2}\|H_t\|_{L^2}\dif t+\frac{1}{\tilde{\rho}^{\frac{8}{3}\alpha}}\int_0^T\|\nabla H\|_{L^2}^\frac{4}{3}\|\nabla u\|_{L^2}^\frac{2}{3}\dif t+\frac{1}{\tilde{\rho}^{2(\alpha-\gamma+1)}}\int_0^T\| \nabla \rho\|_{L^2}^2\dif t\bigg)\nonumber\\
\leq~& C\bigg(\frac{1}{\tilde{\rho}^{\alpha-\frac{1}{4}-\frac{3}{4}\gamma-2\delta}}+\frac{1}{\tilde{\rho}^{5\alpha-\frac{3}{2}\gamma-\frac{7}{2}-4\delta}}+\frac{1}{\tilde{\rho}^{\alpha-1-4\delta}}+\frac{1}{\tilde{\rho}^{3\alpha-1}}+\frac{1}{\tilde{\rho}^{2\alpha-\frac{1}{2}}}+\frac{1}{\tilde{\rho}^{\frac{7}{3}\alpha-1}}+\frac{1}{\tilde{\rho}^{\alpha-\gamma-\delta}}\bigg)\nonumber\\
\leq~& C\tilde{\rho}^{\gamma-\alpha+\delta},
\end{align*}
provided that $\alpha>\frac{1}{8}\gamma+\frac{7}{8}$. 

Moreover, after direct calculations, $\|\nabla^2 H\|_{L^2}^2\leq C\tilde{\rho}^{2\alpha-1+\delta}
$ can be proved from (\ref{3.9}), since $\| H_t\|^2_{L^2}$ is the leading term. At last,
\begin{align}\label{3.11:2}
\int_0^T\|\nabla^2 H\|_{L^2}^2\dif t
&\leq C\int_0^T\big(\|H_t\|_{L^2}^2+\|\nabla u\|_{L^2}^4\|\nabla H\|_{L^2}^2+\|\nabla H\|_{L^2}^2\|\nabla u\|_{L^2}\|\nabla^2 u\|_{L^2}\big)\dif t\nonumber\\
&\leq C+C\tilde{\rho}^{\frac{3}{4}\gamma-\frac{3}{4}+2\delta}\int_0^T\|\nabla u\|_{L^2}^2\dif t+C\int_0^T\|\nabla u\|_{L^2}\|\nabla^2 u\|_{L^2}\dif t\nonumber\\
&\leq C+C\tilde{\rho}^{\frac{3}{4}\gamma+\frac{1}{4}+2\delta-\alpha}+C\tilde{\rho}^{\frac{1}{2}+\frac{1}{2}\gamma-\alpha+\frac{1}{2}\delta}\nonumber\\
&\leq C,
\end{align}
provided that $\alpha>\frac{3}{4}\gamma+\frac{1}{4}$. 
This completes the proof of Lemma \ref{lem:3.3}.
\end{proof}
\end{lemma}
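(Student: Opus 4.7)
The plan is to treat the momentum equation as an elliptic system for $u$ and the induction equation (using $\mathrm{div}\,H=0$) as an elliptic system for $H$, and then to apply the standard $L^2$-theory together with Gagliardo--Nirenberg interpolation, the bootstrap bounds \eqref{3.09}, and the energy estimates from \autoref{lem:2.1}.

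For the velocity, I would rewrite $(\ref{1.1})_2$ by dividing by $\rho^\alpha$ and grouping the pressure term in the form $\nabla\mathcal{P}(\rho)$ with $\mathcal{P}(\rho)=\frac{A\gamma}{\gamma-\alpha}\rho^{\gamma-\alpha}$, so that up to lower order terms one obtains
\begin{equation*}
-\Delta u-2\nabla\mathrm{div}\,u+\nabla\mathcal{P}(\rho)=\rho^{-\alpha}\bigl(-\rho u_t-\rho u\cdot\nabla u+H\cdot\nabla H-\tfrac{1}{2}\nabla|H|^2+\text{l.o.t.}\bigr).
\end{equation*}
Applying the $L^2$-elliptic estimate and using $\rho\sim\tilde\rho$ to pull out powers of $\tilde\rho^{-\alpha}$, I would estimate $\|\rho u\cdot\nabla u\|_{L^2}$ by Sobolev embedding and the energy bound \eqref{3.1}, $\|\nabla\rho\cdot\nabla u\|_{L^2}$ using $\|\nabla\rho\|_{L^4}$ from $A_4$ with a Gagliardo--Nirenberg interpolation for $\|\nabla u\|_{L^4}$, and $\|H\cdot\nabla H\|_{L^2}$ via $\|H\|_{L^6}\|\nabla H\|_{L^3}$ and interpolation. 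The last step introduces $\|\nabla^2 H\|_{L^2}^{1/2}$ on the right, which I would eliminate by substituting the analogous elliptic estimate for $H$ and applying Young's inequality to absorb the resulting fractional power of $\|\nabla^2 u\|_{L^2}$ back into the left-hand side.

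For the magnetic field, use $\mathrm{div}\,H=0$ to rewrite $(\ref{1.1})_3$ as $\nu\Delta H=H_t-(H\cdot\nabla)u+(u\cdot\nabla)H+H\,\mathrm{div}\,u$, giving
\begin{equation*}
\|\nabla^2 H\|_{L^2}\leq C\bigl(\|H_t\|_{L^2}+\|u\|_{L^6}\|\nabla H\|_{L^3}+\|H\|_{L^6}\|\nabla u\|_{L^3}\bigr).
\end{equation*}
The term $\|H_t\|_{L^2}$ is controlled pointwise in time by $\sqrt{A_3}\lesssim\tilde\rho^{\alpha-\frac{1}{2}+\frac{\delta}{2}}$, providing the leading contribution to the claimed bound $\tilde\rho^{2\alpha-1+\delta}$, while the remaining two terms, after Gagliardo--Nirenberg, contribute only lower order. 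For the time integral $\int_0^T\|\nabla^2 H\|_{L^2}^2\dif t$, I would integrate the pointwise bound, use $\int_0^T\|H_t\|_{L^2}^2\dif t\leq A_2\lesssim 1$, $\int_0^T\|\nabla u\|_{L^2}^2\dif t\lesssim\tilde\rho^{1-\alpha}$ from \eqref{3.1}, and Cauchy--Schwarz on the crossed term; the hypothesis $\alpha>\frac{3}{2}\gamma-\frac{1}{2}$ should render each resulting exponent of $\tilde\rho$ nonpositive.

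The main obstacle will be the bookkeeping of powers of $\tilde\rho$: each source term contributes its own exponent, and the coupling $\|\nabla^2 u\|_{L^2}\leftrightarrow\|\nabla^2 H\|_{L^2}^{1/2}$ forces me to treat the two elliptic estimates simultaneously while being careful to absorb $\|\nabla^2 u\|_{L^2}$ only once on the left and to let the $\|H_t\|_{L^2}$-driven piece of $\|\nabla^2 H\|_{L^2}$ pass to the forcing. Verifying that the worst exponent for $u$ is exactly $\frac{3}{4}\gamma-\frac{3}{4}+6\delta$ looks delicate: it should come from the contribution $\tilde\rho^{-4}\|\nabla\rho\|_{L^4}^4\|\nabla u\|_{L^2}^2$ produced by combining $A_4\leq 3\tilde\rho^{2+\delta}$ with $A_1\leq 3\tilde\rho^{\alpha+\frac{3}{4}\gamma-\frac{3}{4}+2\delta}$, and $\alpha>\frac{3}{2}\gamma-\frac{1}{2}$ must be strong enough to dominate every competing exponent.
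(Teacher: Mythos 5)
Your proposal is correct and follows essentially the same route as the paper: the $L^2$-elliptic estimate for the rewritten momentum equation with $\mathcal{P}(\rho)=\frac{A\gamma}{\gamma-\alpha}\rho^{\gamma-\alpha}$, substitution of the $\nu\Delta H$-based estimate for $\|\nabla^2 H\|_{L^2}$ into the $H\cdot\nabla H$ term with Young's inequality absorbing the fractional power of $\|\nabla^2 u\|_{L^2}$, the leading exponent $\tilde{\rho}^{\frac34\gamma-\frac34+6\delta}$ coming from $\tilde{\rho}^{-4}\|\nabla\rho\|_{L^4}^4\|\nabla u\|_{L^2}^2$, the $\|H_t\|_{L^2}$-dominated pointwise bound for $\|\nabla^2 H\|_{L^2}$, and the time integrals controlled by $A_2$ and \eqref{3.1}. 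The intermediate exponent conditions that arise (e.g.\ $\alpha\geq\frac38\gamma+\frac58$, $\alpha>\frac34\gamma+\frac14$) are indeed implied by $\alpha>\frac32\gamma-\frac12$ and $\gamma>1$, as you anticipate.
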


\begin{lemma}\label{lem:3.4}
Assume that \eqref{3.09} holds, then we have 
\begin{equation}\label{3.12}
\left(\frac{\tilde{\rho}}{2}\right)^\alpha\sup_{0\leq t\leq T}\|\nabla{u}\|^2_{L^2}+\int_0^T\|\sqrt\rho{u_t}\|^2_{L^2}\dif t\leq 2\tilde{\rho}^{\alpha+\frac{3}{4}\gamma-\frac{3}{4}+2\delta},
\end{equation}

\begin{equation}\label{3.13}
\sup_{0\leq t\leq T}\|\nabla{H}\|^2_{L^2}+\int_0^T\|{H_t}\|^2_{L^2}\dif t\leq 2\|\nabla H_0\|^2_{L^2},
\end{equation}
provided there exists a constant  $M_2$, such that $\tilde{\rho}\geq M_2$.
\begin{proof}
Multiplying (\ref{1.1})$_2$ by $u_t$, integrating over ${{\mathbb{R}}}^3$, and after integration by parts, we have
\begin{align}\label{3.13.5}
&\frac{\dif}{\dif t}\int\left(\rho^\alpha|\mathcal{D}(u)|^2+\frac{\rho^\alpha}{2}|\text{div}u|^2\right)\dif x+\int\rho |u_t|^2\dif x\nonumber\\
=~&\frac{\dif}{\dif t}\int\left(P(\rho)-P( \tilde{\rho})\right)\text{div} u\dif x-\int P_t(\rho)\text{div} u\dif x +\int\left(\left(\rho^\alpha\right)_t|\mathcal {D}(u)|^2+\frac{1}{2}\left(\rho^\alpha\right)_t|\text{div} u|^2\right)\dif x\nonumber\\
&+\int\left(-\rho u\cdot \nabla u+ 2\nabla \rho^\alpha\cdot \mathcal {D}(u)+\nabla \rho^\alpha\cdot \text{div} u\mathbb{I}_3+H\cdot\nabla H-\frac{1}{2}\nabla |H|^2\right)\cdot u_t\dif x\nonumber\\
=~& \frac{\dif}{\dif t}\int\left(P(\rho)-P( \tilde{\rho})\right)\text{div} u\dif x+I_1+I_2+I_3,
\end{align}
where
\begin{align}\label{3.14}
I_1&=-\int P_t(\rho)\text{div} u \dif x\nonumber\\
&\leq C\left( \tilde{\rho}^{\gamma-1}\|\nabla\rho\|_{L^2}\|u\|_{L^\infty}\|\nabla u\|_{L^2}+\tilde{\rho}^{\gamma}\|\nabla u\|_{L^2}^2\right)\nonumber\\
&\leq C\left( \tilde{\rho}^{\gamma-1}\|\nabla\rho\|_{L^2}\|\nabla u\|_{L^2}^\frac{3}{2}\|\nabla^2 u\|_{L^2}^\frac{1}{2}+\tilde{\rho}^{\gamma}\|\nabla u\|_{L^2}^2\right)\nonumber\\
&\leq C\left(\tilde{\rho}^{\gamma+\frac{1}{2}\delta}\|\nabla u\|_{L^2}^\frac{3}{2}\|\nabla^2 u\|_{L^2}^\frac{1}{2}+\tilde{\rho}^{\gamma}\|\nabla u\|_{L^2}^2\right),
%\leq C\tilde{\rho}^{\gamma}\|\nabla u\|_{L^2}^2,
\end{align}

\begin{align}\label{3.15}
I_2&=\int\left(\left(\rho^\alpha\right)_t|\mathcal {D}(u)|^2+\frac{1}{2}\left(\rho^\alpha\right)_t|\text{div} u|^2\right)\dif x\nonumber\\
&\leq C\left( \tilde{\rho}^{\alpha-1}\|\nabla\rho\|_{L^4}\|u\|_{L^\infty}\|\nabla u\|_{L^4}\|\nabla u\|_{L^2}+\tilde{\rho}^{\alpha}\|\nabla u\|_{L^3}^3\right)\nonumber\\
&\leq C\left( \tilde{\rho}^{\alpha-1}\|\nabla\rho\|_{L^4}\|\nabla u\|_{L^2}^\frac{7}{4}\|\nabla^2 u\|_{L^2}^\frac{5}{4}+\tilde{\rho}^{\alpha}\|\nabla u\|_{L^2}^\frac{3}{2}\|\nabla^2 u\|_{L^2}^\frac{3}{2}\right)\nonumber\\
&\leq C\left( \tilde{\rho}^{\alpha+\frac{3}{2}\delta+\frac{3}{8}(\gamma-1)}\|\nabla u\|_{L^2}^\frac{3}{4}\|\nabla^2 u\|_{L^2}^\frac{5}{4}+\tilde{\rho}^{\alpha+\delta+\frac{3}{8}(\gamma-1)}\|\nabla u\|_{L^2}^\frac{1}{2}\|\nabla^2 u\|_{L^2}^\frac{3}{2}\right),
%&\leq C\left( \tilde{\rho}^{\frac{\gamma+1}{2}}+\tilde{\rho}^{\alpha}\right)\|\nabla u\|_{L^2}^\frac{3}{2}\|\nabla^2 u\|_{L^2}^\frac{3}{2}\nonumber\\
%&\leq C\tilde{\rho}^{\alpha+\frac{1}{2}\delta}\|\nabla u\|_{L^2}^\frac{3}{2}\|\nabla^2 u\|_{L^2}^\frac{1}{2}.
\end{align}

\begin{align}\label{3.16}
I_3=~&\int\left(-\rho u\cdot \nabla u+ 2\nabla \rho^\alpha\cdot \mathcal {D}(u)+\nabla \rho^\alpha\cdot \text{div} u\mathbb{I}_3+H\cdot\nabla H-\frac{1}{2}\nabla |H|^2\right)\cdot u_t \dif x\nonumber\\
%&\leq \frac{1}{4}\int\rho |u_t|^2 \dif x+C\tilde{\rho}\int |u\cdot \nabla u|^2 \dif x+C\tilde{\rho}^{2\alpha-3}\int |\nabla \rho|^2 |\nabla u|^2 \dif x+C\tilde{\rho}^{-1}\int |H|^2 |\nabla H|^2 \dif x\nonumber\\
\leq~& C\left(\tilde{\rho}^\frac{1}{2}\|\sqrt{\rho} u_t\|_{L^2}\|u\|_{L^\infty}\|\nabla u\|_{L^2}+\tilde{\rho}^{\alpha-\frac{3}{2}}\|\sqrt{\rho} u_t\|_{L^2}\|\nabla \rho\|_{L^4}\|\nabla u\|_{L^4}+\tilde{\rho}^{-\frac{1}{2}}\|\sqrt{\rho} u_t\|_{L^2}\|H\|_{L^\infty}\|\nabla H\|_{L^2}\right)\nonumber\\
\leq~& C\bigg(\tilde{\rho}^\frac{1}{2}\|\sqrt{\rho} u_t\|_{L^2}\|\nabla u\|_{L^2}^\frac{3}{2}\|\nabla^2 u\|_{L^2}^\frac{1}{2}+\tilde{\rho}^{\alpha-\frac{3}{2}}\|\sqrt{\rho} u_t\|_{L^2}\|\nabla \rho\|_{L^4}\|\nabla u\|_{L^2}^\frac{1}{4}\|\nabla^2 u\|_{L^2}^\frac{3}{4}\nonumber\\
&+\tilde{\rho}^{-\frac{1}{2}}\|\sqrt{\rho} u_t\|_{L^2}\|\nabla H\|_{L^2}^\frac{3}{2}\|\nabla^2 H\|_{L^2}^\frac{1}{2}\bigg)\nonumber\\
\leq~& C\bigg(\tilde{\rho}^{\frac{1}{8}+\frac{3}{8}\gamma+\delta}\|\sqrt{\rho} u_t\|_{L^2}\|\nabla u\|_{L^2}^\frac{1}{2}\|\nabla^2 u\|_{L^2}^\frac{1}{2}+\tilde{\rho}^{\alpha-\frac{1}{2}+\frac{1}{2}\delta}\|\sqrt{\rho} u_t\|_{L^2}\|\nabla u\|_{L^2}^\frac{1}{4}\|\nabla^2 u\|_{L^2}^\frac{3}{4}\nonumber\\
&+\tilde{\rho}^{-\frac{1}{2}}\|\sqrt{\rho} u_t\|_{L^2}\|\nabla H\|_{L^2}^\frac{1}{2}\|\nabla^2 H\|_{L^2}^\frac{1}{2}\bigg)\nonumber\\
\leq~&\frac{1}{2}\|\sqrt{\rho} u_t\|_{L^2}^2+ C\left(\tilde{\rho}^{\frac{1}{4}+\frac{3}{4}\gamma+2\delta}\|\nabla u\|_{L^2}\|\nabla^2 u\|_{L^2}+\tilde{\rho}^{2\alpha-1+\delta}\|\nabla u\|_{L^2}^\frac{1}{2}\|\nabla^2 u\|_{L^2}^\frac{3}{2}+\tilde{\rho}^{-1}\|\nabla H\|_{L^2}\|\nabla^2 H\|_{L^2}\right).
%&\leq \frac{1}{4}\int\rho |u_t|^2 \dif x+C\tilde{\rho}\|\nabla u\|_{L^2}^3\|\nabla^2 u\|_{L^2}+C\tilde{\rho}^{2\alpha-3}\|\nabla \rho\|_{L^4}^2\|\nabla u\|_{L^2}^\frac{1}{2}\|\nabla^2 u\|_{L^2}^\frac{3}{2}\nonumber\\
%&\quad\quad+C\tilde{\rho}^{-1}\|\nabla H\|_{L^2}^3\big(\|H_t\|_{L^2}+\|\nabla u\|_{L^2}^2\|\nabla H\|_{L^2}+\|\nabla H\|_{L^2}\|\nabla u\|_{L^2}^\frac{1}{2}\|\nabla^2 u\|_{L^2}^\frac{1}{2}\big)\nonumber\\
%&\leq \frac{1}{4}\int\rho |u_t|^2 \dif x+C\tilde{\rho}\|\nabla u\|_{L^2}\|\nabla^2 u\|_{L^2}+C\tilde{\rho}^{\alpha-2+\gamma-\delta}\|\nabla u\|_{L^2}^\frac{1}{2}\|\nabla^2 u\|_{L^2}^\frac{3}{2}\nonumber\\
%&\quad\quad+C\tilde{\rho}^{-1}\big(\|\nabla H\|_{L^2}\|H_t\|_{L^2}+\|\nabla u\|_{L^2}^2+\|\nabla H\|_{L^2}\|\nabla u\|_{L^2}^\frac{1}{2}\|\nabla^2 u\|_{L^2}^\frac{1}{2}\big)\nonumber\\
\end{align}

Inserting (\ref{3.14})--(\ref{3.16}) into (\ref{3.13}), and integrating with respect to time over $(0,T)$, by using assumption \eqref{3.09}, Lemma \ref{lem:2.1} and Lemma \ref{lem:3.3}, we obtain
\begin{align}\label{3.17}
&\left(\frac{\tilde{\rho}}{2}\right)^\alpha\sup_{0\leq t\leq T}\|\nabla{u}\|^2_{L^2}+\frac{1}{2}\int_0^T\|\sqrt{\rho} u_t\|_{L^2}^2\dif t \nonumber\\
\leq~& (2\tilde{\rho})^\alpha C_0 +\|P(\rho)-P( \tilde{\rho})\|_{L^2}\|\nabla u\|_{L^2}+\|P(\rho_0)-P( \tilde{\rho})\|_{L^2}\|\nabla u_0\|_{L^2}\nonumber\\
&+C\int_0^T\bigg(\tilde{\rho}^{\gamma+\frac{1}{2}\delta}\|\nabla u\|_{L^2}^\frac{3}{2}\|\nabla^2 u\|_{L^2}^\frac{1}{2}+\tilde{\rho}^{\gamma}\|\nabla u\|_{L^2}^2+\tilde{\rho}^{\alpha+\frac{3}{16}(\gamma-1)+\delta}\|\nabla u\|_{L^2}^\frac{3}{4}\|\nabla^2 u\|_{L^2}^\frac{5}{4}\nonumber\\
&+\tilde{\rho}^{\alpha+\frac{\delta}{2}+\frac{3}{16}(\gamma-1)}\|\nabla u\|_{L^2}^\frac{1}{2}\|\nabla^2 u\|_{L^2}^\frac{3}{2}+\tilde{\rho}^{\frac{1}{4}+\frac{3}{4}\gamma+2\delta}\|\nabla u\|_{L^2}\|\nabla^2 u\|_{L^2}+\tilde{\rho}^{2\alpha-1+\delta}\|\nabla u\|_{L^2}^\frac{1}{2}\|\nabla^2 u\|_{L^2}^\frac{3}{2}\nonumber\\
&+\tilde{\rho}^{-1}\|\nabla H\|_{L^2}\|H_t\|_{L^2}+\tilde{\rho}^{-1}\|\nabla u\|_{L^2}^2\|\nabla H\|_{L^2}^2+\tilde{\rho}^{-1}\|\nabla u\|_{L^2}^\frac{1}{2}\|\nabla^2 u\|_{L^2}^\frac{1}{2}\|\nabla H\|_{L^2}^2\bigg)\dif t\nonumber\\
\leq~& (2\tilde{\rho})^\alpha C_0 +C\tilde{\rho}^{\frac{5+11\gamma}{16}+\delta}+C\bigg(\tilde{\rho}^{\frac{5}{4}\gamma+\frac{3}{4}\delta+\frac{3}{4}-\alpha}+\tilde{\rho}^{\gamma+1-\alpha}+\tilde{\rho}^{\frac{3}{16}+\frac{13}{16}\gamma+\frac{13}{8}\delta}+\tilde{\rho}^{\frac{1}{16}+\frac{15}{16}\gamma+\frac{5}{4}\delta}+\tilde{\rho}^{\frac{3}{4}-\alpha+\frac{5}{4}\gamma+\frac{5}{2}\delta}\nonumber\\
&+\tilde{\rho}^{\alpha-\frac{3}{4}+\frac{3}{4}\gamma+\frac{7}{4}\delta}+\tilde{\rho}^{-1}+\tilde{\rho}^{-\alpha}+\tilde{\rho}^{\frac{\gamma}{4}-\frac{\alpha}{2}-\frac{3}{4}+\frac{\delta}{4}}\bigg)\nonumber\\
\leq~& 2^\alpha C_0 \tilde{\rho}^\alpha + \tilde{\rho}^{\alpha+\frac{3}{4}\gamma-\frac{3}{4}+2\delta}\nonumber\\
\leq~&  2\tilde{\rho}^{\alpha+\frac{3}{4}\gamma-\frac{3}{4}+2\delta},
\end{align}
provided that $\alpha\geq\frac{1}{4}\gamma+\frac{3}{4}$ and $\tilde{\rho}\geq M_{21}\triangleq(2^\alpha C_0)^{\frac{4}{3(\alpha-1)+8\delta}}$.% and $2^\alpha C_0 \tilde{\rho}^\alpha\leq\tilde{\rho}^{\alpha+\frac{3}{4}\gamma-\frac{3}{4}+2\delta}.$ 

At last, multiplying (\ref{1.1})$_3$ by $H_t$ and integrating by parts, we have
\begin{align}\label{3.18}
\frac{\dif}{\dif t}\|\nabla H\|_{L^2}^2+\|H_t\|_{L^2}^2
&\leq C\int |\nabla u||H||H_t| \dif x+C\int |u||\nabla H||H_t| \dif x\nonumber\\
&\leq \frac{1}{4}\int |H_t|^2 \dif x+ C\int|\nabla u|^2|H|^2 \dif x+ C\int|u|^2|\nabla H|^2 \dif x\nonumber\\
&\leq  \frac{1}{4}\int |H_t|^2 \dif x+ \|\nabla u\|_{L^3}^2\|H\|_{L^6}^2+ \|u\|_{L^\infty}^2\|\nabla H\|_{L^2}^2\nonumber\\
&\leq  \frac{1}{4}\int |H_t|^2 \dif x+ C\|\nabla u\|_{L^2}\|\nabla^2 u\|_{L^2}\|\nabla H\|_{L^2}^2,
\end{align}
% \begin{align}\label{3.18}
% \frac{\dif}{\dif t}\|\nabla H\|_{L^2}^2+\|H_t\|_{L^2}^2
% &\leq C\int |\nabla u||H||H_t| \dif x+C\int |u||\nabla H||H_t| \dif x\nonumber\\
% &\leq \frac{1}{4}\int |H_t|^2 \dif x+ C\int|\nabla u|^2|H|^2 \dif x+ C\int|u|^2|\nabla H|^2 \dif x\nonumber\\
% &\leq  \frac{1}{4}\int |H_t|^2 \dif x+ \|\nabla u\|_{L^3}^2\|H\|_{L^6}^2+ \|u\|_{L^\infty}^2\|\nabla H\|_{L^2}^2\nonumber\\
% &\leq  \frac{1}{4}\int |H_t|^2 \dif x+ C\|\nabla u\|_{L^2}\|\nabla^2 u\|_{L^2}\|\nabla H\|_{L^2}^2.
% \end{align}
% Integrating the above inequality over $(0,T)$, we obtain
% \begin{align}\label{3.19}
% \sup_{0\leq t\leq T}\|\nabla{H}\|^2_{L^2}+\int_0^T\|H_t\|_{L^2}^2\dif t &\leq \|\nabla H_0\|_{L^2}^2+C\int_0^T\|\nabla u\|_{L^2}\|\nabla^2 u\|_{L^2}\|\nabla H\|_{L^2}^2\dif t,
% \end{align}
which together with Gronwall's inequality, (\ref{3.1}) and (\ref{3.5}) implies
\begin{align}\label{3.20}
\sup_{0\leq t\leq T}\|\nabla{H}\|^2_{L^2}+\int_0^T\|H_t\|_{L^2}^2\dif t\leq~ &\|\nabla H_0\|_{L^2}^2\exp\left\{C\int_0^T \|\nabla u\|_{L^2}\|\nabla^2 u\|_{L^2}\dif t\right\}\nonumber\\
\leq~& \|\nabla H_0\|_{L^2}^2\exp\left\{C_3\tilde{\rho}^{\frac{1}{2}-\alpha+\frac{\gamma}{2}+\frac{\delta}{2}}\right\} \nonumber\\
\leq~&2\|\nabla H_0\|_{L^2}^2,
\end{align}
provided that $\alpha>\frac{1}{2}\gamma+\frac{1}{2}$ and $\tilde{\rho}\geq M_{22}\triangleq(\frac{C_3}{\ln 2})^{\frac{1}{2\alpha-\gamma-1-\delta}}$. Hence, taking $M_{2}=\max\{M_{21},M_{22}\}$, Lemma \ref{lem:3.4} is proved provided $\tilde{\rho}\geq M_{2}$.
\end{proof}
\end{lemma}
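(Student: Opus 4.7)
The strategy is to close the bootstrap bounds on $A_1(T)$ and $A_2(T)$ by testing the momentum equation against $u_t$ and the induction equation against $H_t$. For the velocity estimate I would multiply $(\ref{1.1})_2$ by $u_t$ and integrate by parts on $\mathbb{R}^3$; after using $(\ref{1.1})_1$, the principal part produces on the left-hand side $\frac{\dif}{\dif t}\int(\rho^\alpha|\mathcal{D}(u)|^2+\tfrac{\rho^\alpha}{2}|\mathrm{div}\,u|^2)\,\dif x+\int\rho|u_t|^2\,\dif x$, together with the pressure-induced time derivative $\frac{\dif}{\dif t}\int(P(\rho)-P(\tilde\rho))\mathrm{div}\,u\,\dif x$. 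On the right I would collect three forcings: $I_1$ from $P_t(\rho)\mathrm{div}\,u$, $I_2$ from $(\rho^\alpha)_t$ acting on $|\mathcal{D}(u)|^2$ and $(\mathrm{div}\,u)^2$, and $I_3$ containing the convection $\rho u\cdot\nabla u$, the viscous commutator $\nabla\rho^\alpha\cdot\mathcal{D}(u)$ and the magnetic stress $H\cdot\nabla H-\tfrac{1}{2}\nabla|H|^2$, all paired with $u_t$.

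Each of $I_1,I_2,I_3$ would then be controlled by H\"older, Gagliardo--Nirenberg and the a priori bounds already available: the bootstrap $A_4(T)\le 3\tilde\rho^{2+\delta}$ converts $\|\nabla\rho\|_{L^2}$ and $\|\nabla\rho\|_{L^4}$ into powers of $\tilde\rho$, the $H^2$-bounds of Lemma \ref{lem:3.3} handle $\|\nabla^2 u\|_{L^2}$ and $\|\nabla^2 H\|_{L^2}$ factors, the basic energy bound (\ref{3.1}) supplies the $\|\nabla u\|_{L^2}$ time integral, and the bootstrap $A_2(T)\le 3\|\nabla H_0\|_{L^2}^2$ provides a $\tilde\rho$-independent bound on $\|\nabla H\|_{L^2}$. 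Every $\|\sqrt\rho u_t\|_{L^2}^2$ factor generated in $I_3$ is absorbed into the LHS by Young's inequality with a $\tfrac{1}{2}$-coefficient. After integrating in time on $(0,T)$, a leading contribution $(2\tilde\rho)^\alpha C_0$ appears from the pressure boundary term at $t=0$; this is dominated by $\tilde\rho^{\alpha+\frac{3}{4}\gamma-\frac{3}{4}+2\delta}$ once $\tilde\rho\ge M_{21}=(2^\alpha C_0)^{\frac{4}{3(\alpha-1)+8\delta}}$, giving (\ref{3.12}).

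For the magnetic field, multiplying $(\ref{1.1})_3$ by $H_t$ and integrating by parts yields $\frac{\dif}{\dif t}\|\nabla H\|_{L^2}^2+\|H_t\|_{L^2}^2\le\tfrac{1}{4}\|H_t\|_{L^2}^2+C\|\nabla u\|_{L^3}^2\|H\|_{L^6}^2+C\|u\|_{L^\infty}^2\|\nabla H\|_{L^2}^2$, and by Sobolev embedding together with Gagliardo--Nirenberg both forcings can be bounded by $C\|\nabla u\|_{L^2}\|\nabla^2 u\|_{L^2}\|\nabla H\|_{L^2}^2$. Gronwall's inequality then produces (\ref{3.13}) provided the exponent $\int_0^T\|\nabla u\|_{L^2}\|\nabla^2 u\|_{L^2}\,\dif t$ is small. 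Cauchy--Schwarz combined with (\ref{3.1}) and (\ref{3.5}) indeed bounds this integral by $C\tilde\rho^{\frac{1}{2}-\alpha+\frac{\gamma}{2}+\frac{\delta}{2}}$, which is less than $\ln 2$ under $\alpha>\frac{\gamma+1}{2}$ (weaker than $\alpha>\frac{3}{2}\gamma-\tfrac{1}{2}$) for $\tilde\rho\ge M_{22}=(C_3/\ln 2)^{1/(2\alpha-\gamma-1-\delta)}$.

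The hard part will be the exponent bookkeeping in $I_1$--$I_3$: every residual power of $\tilde\rho$ must be strictly smaller than $\alpha+\frac{3}{4}\gamma-\frac{3}{4}+2\delta$ so that the target upper bound can be realised for $\tilde\rho$ large. The most delicate contributions are the magnetic-stress factor in $I_3$, which relies crucially on the $\tilde\rho$-independent control of $\|\nabla H\|_{L^2}$ provided by the $A_2$-bootstrap, and the $\|\nabla\rho\|_{L^4}$ factor in $I_2$, which costs an extra $\tilde\rho^{\frac{3}{8}(\gamma-1)}$. Under the hypotheses $\alpha>\frac{3}{2}\gamma-\tfrac{1}{2}$ and $0<\delta<\tfrac{1}{16}\min\{\gamma-1,2\alpha+1-3\gamma\}$ all residual exponents turn negative, so choosing $M_2=\max\{M_{21},M_{22}\}$ closes the estimate.
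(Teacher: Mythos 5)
Your proposal follows essentially the same route as the paper: the same $u_t$-test with the decomposition into $I_1,I_2,I_3$, absorption of $\|\sqrt\rho u_t\|_{L^2}^2$ by Young, use of the $A_2$- and $A_4$-bootstraps together with Lemma \ref{lem:2.1} and Lemma \ref{lem:3.3} for the exponent bookkeeping, and the identical $H_t$-test plus Gronwall with $\int_0^T\|\nabla u\|_{L^2}\|\nabla^2 u\|_{L^2}\dif t\leq C\tilde\rho^{\frac{1}{2}-\alpha+\frac{\gamma}{2}+\frac{\delta}{2}}$, including the same thresholds $M_{21},M_{22}$. The only slip is attributional: the leading term $(2\tilde\rho)^\alpha C_0$ comes from the initial viscous energy $\int\rho_0^\alpha|\mathcal{D}(u_0)|^2\dif x$ rather than from the pressure cross term at $t=0$, which does not affect the argument.
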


\begin{lemma}\label{lemma:2.5}
Assume that \eqref{3.09} holds, then we have 
\begin{align*}
\sup_{0\leq t\leq T}\left(\|\sqrt\rho u_t\|^2_{L^2}+\| H_t\|^2_{L^2}\right)+\int_0^T\left(\tilde{\rho}^\alpha\|\nabla u_t\|^2_{L^2}+\nu\|\nabla H_t\|^2_{L^2}\right)\dif t\leq \tilde{\rho}^{2\alpha-1+\delta},
\end{align*}
% \begin{align*}
% \sup_{0\leq t\leq T}\| H_t\|^2_{L^2}+\nu\int_0^T\|\nabla H_t\|^2_{L^2}\dif t\leq \tilde{\rho}^{2\alpha-1+\delta},
% \end{align*}
provided there exists a constant  $M_3$, such that $\tilde{\rho}\geq M_3$.
\end{lemma}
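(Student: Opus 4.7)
The plan is to differentiate the momentum equation $(\ref{1.1})_2$ and the induction equation $(\ref{1.1})_3$ in time, test them against $u_t$ and $H_t$ respectively, and add the two identities. Using $\rho_t=-\dd(\rho u)$ together with integration by parts one obtains an energy identity of the schematic form
\begin{align*}
\frac{1}{2}\frac{\dif}{\dif t}\!\left(\|\sqrt{\rho}\,u_t\|_{L^2}^2 + \|H_t\|_{L^2}^2\right) + c_0\tilde\rho^{\alpha}\|\nabla u_t\|_{L^2}^2 + \nu\|\nabla H_t\|_{L^2}^2 \le \sum_{j}R_{j},
\end{align*}
where the $R_j$ split into three families: (a) contributions of $\rho_t=-\dd(\rho u)$ appearing in the convective and pressure terms; (b) contributions of $(\rho^\alpha)_t=\alpha\rho^{\alpha-1}\rho_t$ produced when $\partial_t$ hits the viscous stress $\rho^\alpha\mathcal D(u)$; and (c) $u_t$--$H_t$ coupling terms produced by differentiating the Lorentz force $(\nabla\times H)\times H$ and the induction term $\nabla\times(u\times H)$.

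Each $R_j$ would then be bounded by H\"older, Gagliardo--Nirenberg and the a priori bounds of Lemmas~\ref{lem:2.1}--\ref{lem:3.4}, together with $A_3,A_4$ from $(\ref{3.09})$. The representative delicate terms are
\begin{align*}
\int (\rho^\alpha)_t\,\mathcal D(u){:}\nabla u_t\,\dif x,\qquad \int \rho\,u_t\!\cdot\!\nabla u\!\cdot\!u_t\,\dif x,\qquad \int\!\bigl((u_t\!\times\! H)+(u\!\times\! H_t)\bigr)\!\cdot\!(\nabla\!\times\! H_t)\,\dif x.
\end{align*}
After the $\|\nabla u_t\|_{L^2}$ and $\|\nabla H_t\|_{L^2}$ factors are absorbed through Young's inequality into the dissipative terms on the left (the $\tilde\rho^\alpha$ in front of $\|\nabla u_t\|_{L^2}^2$ gives the room to do so), each remainder is bounded by $\tilde\rho^{2\alpha-1+\delta}\psi(t)$ with $\psi\in L^1(0,T)$ uniformly in $\tilde\rho$. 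The magnetic cross-terms are kept subleading by exploiting the $\tilde\rho$-independent bound $\|H\|_{H^1}\le C$ from Lemmas~\ref{lem:2.1} and \ref{lem:3.4}, combined with $\|\nabla^2 H\|_{L^2}^2\lesssim\tilde\rho^{2\alpha-1+\delta}$ from Lemma~\ref{lem:3.3}.

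For the initial value side of Gronwall I need $\|\sqrt{\rho_0}u_t(0)\|_{L^2}^2 + \|H_t(0)\|_{L^2}^2\le C\tilde\rho^{2\alpha-1}$, which follows by reading the equations at $t=0$:
\begin{align*}
\rho_0 u_t(0)=\dd\!\bigl(2\bar\mu\rho_0^\alpha\mathcal D(u_0)+\bar\lambda\rho_0^\alpha\,\dd u_0\,\mathbb I_3\bigr)-\rho_0 u_0\!\cdot\!\nabla u_0-\nabla P(\rho_0)+(\nabla\!\times\! H_0)\!\times\! H_0,
\end{align*}
and the analogous identity $H_t(0)=\nabla\times(u_0\times H_0)+\nu\Delta H_0$; the $H^2$-regularity of $(u_0,H_0)$ in $(\ref{data})$ together with $\rho_0\simeq\tilde\rho$ gives the desired $\tilde\rho^{2\alpha-1}$ (respectively $\tilde\rho$-independent) bounds. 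Gronwall's inequality applied to the energy identity then closes the estimate, provided $\tilde\rho\ge M_3$ is taken large enough.

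I expect the main obstacle to be the sharp $\tilde\rho$-bookkeeping: one must verify that no $R_j$ contributes a $\tilde\rho$-exponent strictly greater than $2\alpha-1+\delta$. The hypothesis $\alpha>\frac32\gamma-\frac12$ is precisely what supplies the dissipation budget $\tilde\rho^\alpha\|\nabla u_t\|_{L^2}^2$ needed to absorb the worst magnetic couplings of the form $\|u\|_{L^\infty}\|\nabla H_t\|_{L^2}\|H\|_{L^3}$, where $\|u\|_{L^\infty}\lesssim\|\nabla u\|_{L^2}^{1/2}\|\nabla^2 u\|_{L^2}^{1/2}$ already carries a $\tilde\rho$-growing factor from Lemmas~\ref{lem:3.3}--\ref{lem:3.4}. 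Once this bookkeeping is in place, the proof structure closely mirrors that of Lemma~\ref{lem:3.4}.
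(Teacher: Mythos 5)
Your proposal follows essentially the same route as the paper: differentiate $(\ref{1.1})_2$ and $(\ref{1.1})_3$ in time, test with $u_t$ and $H_t$, bound the convective, $(\rho^\alpha)_t$, pressure and magnetic coupling terms via H\"older/Gagliardo--Nirenberg together with the bounds of \eqref{3.09} and Lemmas \ref{lem:2.1}--\ref{lem:3.4}, read off $u_{t0}$ and $H_{t0}$ from the equations at $t=0$ to get $\|\sqrt{\rho_0}u_{t0}\|_{L^2}^2+\|H_{t0}\|_{L^2}^2\leq C\tilde{\rho}^{2\alpha-1}$, and then take $\tilde{\rho}\geq M_3$ large; the only (inessential) organizational difference is that you sum the two energy identities into one Gronwall argument, whereas the paper estimates them sequentially, absorbing the $\|\nabla H_t\|_{L^2}$ contributions in $J_7+J_8$ via the prefactor $\tilde{\rho}^{-\alpha}$ and the a priori bound on $A_3(T)$. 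A small caveat: the constraint $\alpha>\frac{3}{2}\gamma-\frac{1}{2}$ is not what this lemma needs (here $\alpha>\frac{3}{4}\gamma+\frac{1}{4}$ and $\alpha>\frac{3}{2}\gamma-\frac{3}{2}$ suffice); the sharp condition enters earlier, in the magnetic $L^2$ estimate of Lemma \ref{lem:2.1}.
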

\begin{proof} 
Differentiating $(\ref{1.1})_2$ with respect to $t$ and taking $L^2$ inner product with $u_t$, one has
\begin{align}\label{3.21}
& \frac{1}{2}\frac{\dif}{\dif t}\int \rho\left|u_t\right|^2 \dif x+\int\left(2 \mu \rho^\alpha\left|\mathcal{D}\left(u_t\right)\right|^2+\lambda \rho^\alpha\left(\operatorname{div} u_t\right)^2\right)\dif x \nonumber\\
=~&\int \operatorname{div}(\rho u) \frac{u_t}{2} \cdot u_t\dif x+\int \operatorname{div}(\rho u) u \cdot \nabla u \cdot u_t\dif x-\int \rho u_t \cdot \nabla u \cdot u_t\dif x \nonumber\\
&+\int \operatorname{div}\left(\mu \alpha \rho^{\alpha-1} \rho_t \mathcal{D}(u)\right) \cdot u_t\dif x+\int \nabla\left(\lambda \alpha \rho^{\alpha-1} \rho_t \operatorname{div} u\right) \cdot u_t\dif x+\int P_t \operatorname{div} u_t \dif x \nonumber\\
&+\int\left( (\nabla \times H_t)\times H\right)\cdot u_t \dif x+\int\left( (\nabla \times H)\times H_t\right)\cdot u_t \dif x \nonumber\\
\triangleq~&\sum_{i=1}^{8} J_i,
\end{align}
where $J_i ~ (i=1,2,\cdots 8)$ can be estimated as:
\begin{align*}
J_1+J_3&=\int \operatorname{div}(\rho u) \frac{u_t}{2} \cdot u_t \dif x-\int \rho u_t \cdot \nabla u \cdot u_t \dif x\\
 &\leq C\tilde{\rho}\left\|\nabla u\right\|_{L^6}\left\|u_t\right\|_{L^2}\left\|u_t\right\|_{L^3}+C\left\|u\right\|_{L^\infty}\left\|\nabla \rho\right\|_{L^4}\left\|u_t\right\|_{L^2}\left\|u_t\right\|_{L^4}\\
  &\leq C\tilde{\rho}\left\|\nabla^2 u\right\|_{L^2}\left\|u_t\right\|_{L^2}^\frac{3}{2}\left\|\nabla u_t\right\|_{L^2}^\frac{1}{2}+C\left\|\nabla u\right\|_{L^2}^\frac{1}{2}\left\|\nabla^2u\right\|_{L^2}^\frac{1}{2}\left\|\nabla \rho\right\|_{L^4}\left\|u_t\right\|_{L^2}^\frac{5}{4}\left\|\nabla u_t\right\|_{L^2}^\frac{3}{4}\\
 &\leq \frac{\mu}{2^{\alpha+3}} \tilde{\rho}^\alpha\left\|\nabla u_t\right\|_{L^2}^2+\frac{C}{\tilde{\rho}^{\frac{\alpha}{3}-\frac{1}{3}}}\|\nabla^2 u\|_{L^2}^\frac{4}{3}\left\|\sqrt{\rho} u_t\right\|_{L^2}^2+\frac{C}{\tilde{\rho}^{\frac{3}{5}\alpha+1}}\left\|\nabla u\right\|_{L^2}^\frac{4}{5}\left\|\nabla^2u\right\|_{L^2}^\frac{4}{5}\left\|\nabla \rho\right\|_{L^4}^\frac{8}{5}\left\|\sqrt{\rho} u_t\right\|_{L^2}^2\\
  &\leq \frac{\mu}{2^{\alpha+3}} \tilde{\rho}^\alpha\left\|\nabla u_t\right\|_{L^2}^2+\frac{C}{\tilde{\rho}^{\frac{\alpha}{3}-\frac{1}{12}-\frac{\gamma}{4}-\frac{10}{3}\delta}}\left\|\sqrt{\rho} u_t\right\|_{L^2}^2+\frac{C}{\tilde{\rho}^{\frac{3}{5}\alpha-\frac{3}{20}-\frac{18}{5}\delta-\frac{9}{20}\gamma}}\left\|\sqrt{\rho} u_t\right\|_{L^2}^2,
\end{align*}
\begin{align*}
J_2&=\int \operatorname{div}(\rho u) u \cdot \nabla u \cdot u_t \dif x\\
&\leq C\tilde{\rho}\left\| u\right\|_{L^\infty}\left\|\nabla u\right\|_{L^4}^2\left\|u_t\right\|_{L^2}+C\left\|u\right\|_{L^\infty}^2\left\|\nabla \rho\right\|_{L^4}\left\|\nabla u\right\|_{L^4}\left\|u_t\right\|_{L^2}\\
&\leq C\tilde{\rho}\left\| \nabla u\right\|_{L^2}\left\| \nabla^2u\right\|_{L^2}^2\left\|u_t\right\|_{L^2}+C\left\| \nabla u\right\|_{L^2}^\frac{5}{4}\left\| \nabla^2u\right\|_{L^2}^\frac{7}{4}\left\|\nabla \rho\right\|_{L^4}\left\|u_t\right\|_{L^2}\\
&\leq C\tilde{\rho}^{\frac{1}{8}+\frac{3}{8}\gamma+5\delta}\left\| \nabla u\right\|_{L^2}\left\|\sqrt{\rho}u_t\right\|_{L^2}+C\tilde{\rho}^{\frac{23}{8}\delta+\frac{45}{64}\gamma-\frac{13}{64}}\left\| \nabla u\right\|_{L^2}\left\|\sqrt{\rho}u_t\right\|_{L^2},
\end{align*}

\begin{align*}
J_4+J_5
&=\int \operatorname{div}\left(\mu \alpha \rho^{\alpha-1} \rho_t \mathcal{D}(u)\right) \cdot u_t \dif x+\int \nabla\left(\lambda \alpha \rho^{\alpha-1} \rho_t \operatorname{div} u\right) \cdot u_t \dif x\\
&\leq C\tilde{\rho}^\alpha\left\|\nabla u\right\|_{L^4}^2\left\|\nabla u_t\right\|_{L^2}+C\tilde{\rho}^{\alpha-1}\left\|u\right\|_{L^\infty}\left\|\nabla \rho\right\|_{L^4}\left\|\nabla u\right\|_{L^4}\left\|\nabla u_t\right\|_{L^2}\\
&\leq \frac{\mu}{2^{\alpha+3}} \tilde{\rho}^\alpha\left\|\nabla u_t\right\|_{L^2}^2+C \tilde{\rho}^\alpha\|\nabla u\|_{L^2}\left\|\nabla^2 u\right\|_{L^2}^3+C\tilde{\rho}^{\alpha+\delta}\|\nabla u\|_{L^2}^\frac{3}{2}\left\|\nabla^2 u\right\|_{L^2}^\frac{5}{2}\\
&\leq \frac{\mu}{2^{\alpha+3}} \tilde{\rho}^\alpha\left\|\nabla u_t\right\|_{L^2}^2+C \tilde{\rho}^{\alpha+\frac{3}{8}(\gamma-1)+5\delta}\|\nabla u\|_{L^2}\left\|\nabla^2 u\right\|_{L^2}+C\tilde{\rho}^{\alpha+\frac{15}{32}\gamma-\frac{15}{32}+\frac{21}{4}\delta}\|\nabla u\|_{L^2}\left\|\nabla^2 u\right\|_{L^2},
\end{align*}

\begin{align*}
J_6&=\int P_t \operatorname{div} u_t \dif x\\
&\leq C\tilde{\rho}^\gamma\left\|\nabla u\right\|_{L^2}\left\|\nabla u_t\right\|_{L^2}+C\tilde{\rho}^{\gamma-1}\left\|u\right\|_{L^\infty}\left\|\nabla \rho\right\|_{L^2}\left\|\nabla u_t\right\|_{L^2}\\
&\leq \frac{\mu}{2^{\alpha+3}} \tilde{\rho}^\alpha\left\|\nabla u_t\right\|_{L^2}^2+C \tilde{\rho}^{2 \gamma-\alpha}\|\nabla u\|_{L^2}^2+C \tilde{\rho}^{2\gamma-\alpha+\delta}\|\nabla u\|_{L^2}\|\nabla^2 u\|_{L^2},
\end{align*}

\begin{align*}
J_7+J_8=
&\int\left( (\nabla \times H_t)\times H\right)\cdot u_t \dif x+\int\left( (\nabla \times H)\times H_t\right)\cdot u_t \dif x\\
&\leq C\|\nabla H_t\|_{L^2}\|H\|_{L^3}\|u_t\|_{L^6}+ C\| H_t\|_{L^3}\|\nabla H\|_{L^2}\|u_t\|_{L^6}\\
&\leq C\|\nabla H_t\|_{L^2}\|H\|_{L^2}^{\frac{1}{2}}\|\nabla H\|_{L^2}^{\frac{1}{2}}\|\nabla u_t\|_{L^2}+C\|H_t\|_{L^2}^{\frac{1}{2}}\|\nabla H_t\|_{L^2}^{\frac{1}{2}}\|\nabla H\|_{L^2}\|\nabla u_t\|_{L^2}\\
&\leq \frac{\mu}{2^{\alpha+3}} \tilde{\rho}^\alpha\left\|\nabla u_t\right\|_{L^2}^2+C\tilde{\rho}^{-\alpha}\|\nabla H_t\|_{L^2}^2+C\tilde{\rho}^{-\alpha}\|H_t\|_{L^2}\|\nabla H_t\|_{L^2}.
\end{align*}
%Note that
%\begin{align*}
%& \frac{1}{2} \frac{d}{d t} \int \rho\left|u_t\right|^2 d x+\frac{\mu}{2^\alpha} \tilde{\rho}^\alpha \int\left|\nabla u_t\right|^2 d x+\frac{\mu+\lambda}{2^\alpha} \tilde{\rho}^\alpha \int\left(\operatorname{div} u_t\right)^2 d x \\
%& \leq \frac{1}{2} \frac{d}{d t} \int \rho\left|u_t\right|^2 d x+\int\left[2 \mu \rho^\alpha\left|D\left(u_t\right)\right|^2+\lambda \rho^\alpha\left(\operatorname{div} u_t\right)^2\right] d x.
%\end{align*}
Inserting the above estimates into \eqref{3.21}, and integrating the resulting equality with respect to $t$, we have
\begin{align*}
&\sup_{0 \leq t \leq T} \int \rho\left|u_t\right|^2 d x+\tilde{\rho}^\alpha \int_0^T \int\left|\nabla u_t\right|^2 d x d t \nonumber\\
\leq~&\|\sqrt{\rho_0} u_{t 0} \|_{L^2}^2 +C\left(\frac{1}{\tilde{\rho}^{\frac{\alpha}{3}-\frac{1}{12}-\frac{\gamma}{4}-\frac{10}{3}\delta}}+\frac{1}{\tilde{\rho}^{\frac{3}{5}\alpha-\frac{3}{20}-\frac{18}{5}\delta-\frac{9}{20}\gamma}}\right)\int_0^T\left\|\sqrt{\rho} u_t\right\|_{L^2}^2\dif t \nonumber\\
&+C\left(\tilde{\rho}^{\frac{1}{8}+\frac{3}{8}\gamma+5\delta}+\tilde{\rho}^{\frac{23}{8}\delta+\frac{45}{64}\gamma-\frac{13}{64}}\right)\int_0^T\left\| \nabla u\right\|_{L^2}\left\|\sqrt{\rho}u_t\right\|_{L^2}\dif t\nonumber\\
&+C\tilde{\rho}^{\alpha+\frac{15}{32}\gamma-\frac{15}{32}+\frac{21}{4}\delta}\int_0^T\|\nabla u\|_{L^2}\left\|\nabla^2 u\right\|_{L^2}\dif t+C\tilde{\rho}^{2\gamma-\alpha}\int_0^T\|\nabla u\|_{L^2}^2\dif t\nonumber\\
&+C\tilde{\rho}^{-\alpha}\int_0^T\|\nabla H_t\|_{L^2}^2\dif t+C\tilde{\rho}^{-\alpha}\int_0^T\|H_t\|_{L^2}\|\nabla H_t\|_{L^2}\dif t\nonumber\\
\leq~&\|\sqrt{\rho_0} u_{t 0} \|_{L^2}^2 +C\left(\tilde{\rho}^{\frac{2}{3}\alpha-\frac{2}{3}+\frac{16}{3}\delta+\gamma}+\tilde{\rho}^{\frac{2}{5}\alpha-\frac{3}{5}+\frac{28}{5}\delta+\frac{6}{5}\gamma}\right)+C\left(\tilde{\rho}^{\frac{1}{4}+6\delta+\frac{3}{4}\gamma}+\tilde{\rho}^{4\delta+\frac{69}{64}\gamma-\frac{5}{64}}\right)\nonumber\\
&+C\tilde{\rho}^{\frac{31}{32}\gamma+\frac{1}{32}+6\delta}+C\tilde{\rho}^{2 \gamma-2\alpha+1}+C\tilde{\rho}^{\alpha-1+\delta}+C\tilde{\rho}^{\frac{\delta}{2}-\frac{1}{2}}\nonumber\\
\leq~& C_4\tilde{\rho}^{2\alpha-1} \quad \nonumber\\
\leq~&\tilde{\rho}^{2\alpha-1+\delta},
\end{align*}
provided that $\alpha>\frac{3}{4}\gamma+\frac{1}{4}$ and $\tilde{\rho}\geq M_{31}\triangleq C_4^{\frac{1}{\delta}}$, where in the last inequality, the initial value and it's $L^2$-norm of the second-order derivative of the velocity can be given by 
\begin{align*}
u_{t 0} \triangleq \rho_0^{-1}\left(\operatorname{div}\left(2 \mu \rho_0^\alpha \mathcal{D}\left(u_0\right)\right)+\nabla\left(\lambda \rho_0^{\alpha}\operatorname{div} u_0 \right)-\rho_0 u_0 \cdot \nabla u_0-\nabla P\left(\rho_0\right)\right),
\end{align*}
and
\begin{align*}
\left\|\sqrt{\rho_0} u_{t 0}\right\|_{L^2}^2 \leq C\left(\tilde{\rho}^{2 \alpha-3}+\tilde{\rho}^{2 \alpha-1}+\tilde{\rho}^{2 \gamma-3}+\tilde{\rho}\right)\leq C\tilde{\rho}^{2 \alpha-1}.
\end{align*}
%they play an important role in the proof of our main results. %Since we can use similar argument in the proof of Lemma \ref{lem:3.4} to proof Lemma \ref{lemma:2.5}, we omit the detail proof of \ref{lemma:2.5} for brevity.

Next, differentiating $(\ref{1.1})_3$ with respect to $t$, one has
\begin{align*}
H_{tt}-\nabla\times(u_t\times H)-\nabla\times(u\times H_t)
=-\nabla\times(\nu\nabla\times H_t)
\end{align*}

Taking $L^2$ inner product with $H_t$, we obtain
\begin{align*}
&\frac{1}{2}\frac{\dif}{\dif t}\int|H_t|^{2}\dif x+\int\nu|\nabla H_t|^{2}\dif x\\
\leq~&\int |\nabla u_t|  |H| |H_t| \dif x+\int  |u_t| |\nabla H| |H_t| \dif x+\int |\nabla u|  |H_t|^2 \dif x+\int  |u| |\nabla H_t| |H_t| \dif x\\
\leq~&\|\nabla u_t\|_{L^2}\|H\|_{L^3}\|H_t\|_{L^6}+\|  u_t\|_{L^2}\|\nabla H\|_{L^3}\|H_t\|_{L^6}+\|\nabla u \|_{L^2}\|H_t\|_{L^3}\|H_t\|_{L^6}+\|  u \|_{L^6}\|\nabla H_t\|_{L^2}\|H_t\|_{L^3}\\
\leq~&C\left(\|\nabla u_t\|_{L^2}\|H\|_{L^2}^{\frac{1}{2}}\|\nabla H\|_{L^2}^{\frac{1}{2}}\|\nabla H_t\|_{L^2}
+\|  u_t\|_{L^2}\|\nabla H\|_{L^2}^{\frac{1}{2}}\|\nabla^2 H\|_{L^2}^{\frac{1}{2}}\|\nabla H_t\|_{L^2}+\| \nabla u \|_{L^2}\|H_t\|_{L^2}^{\frac{1}{2}}\|\nabla H_t\|_{L^2}^{\frac{3}{2}}\right)\\
\leq~&\frac{\nu}{2}\|\nabla H_t\|_{L^2}^{2}
+C\|\nabla u_t\|_{L^2}^{2}\|  H \|_{L^2} \|\nabla H \|_{L^2}+C\|  u_t\|_{L^2}^2\|\nabla H\|_{L^2}\|\nabla^2 H\|_{L^2}+C\| \nabla u \|_{L^2}^4\|H_t\|_{L^2}^2\\
\leq~&\frac{\nu}{2}\|\nabla H_t\|_{L^2}^{2}
+C\|\nabla u_t\|_{L^2}^{2}+C\|  u_t\|_{L^2}^2\left(\|H_t\|_{L^2}+\|\nabla u\|_{L^2}^2\|\nabla H\|_{L^2}+\|\nabla H\|_{L^2}\|\nabla u\|_{L^2}^\frac{1}{2}\|\nabla^2 u\|_{L^2}^\frac{1}{2}\right)\\
&+C\tilde{\rho}^{\frac{3}{2}\gamma-\frac{3}{2}+4\delta}\|H_t\|_{L^2}^2\\
\leq~&\frac{\nu}{2}\|\nabla H_t\|_{L^2}^{2}
+C\|\nabla u_t\|_{L^2}^{2}+C\tilde{\rho}^{\alpha-\frac{3}{2}+\frac{\delta}{2}}\| \sqrt{\rho}u_t\|_{L^2}\|H_t\|_{L^2}+C\left(\tilde{\rho}^{\frac{3}{4}\gamma-\frac{7}{4}+2\delta}+\tilde{\rho}^{\frac{1}{4}\alpha+\frac{3}{8}\gamma+2\delta-\frac{11}{8}}\right)\| \sqrt{\rho}u_t\|_{L^2}^2\\
&+C\tilde{\rho}^{\frac{3}{2}\gamma-\frac{3}{2}+4\delta}\|H_t\|_{L^2}^2.
\end{align*}

Integrating the above estimate over $(0,T)$ and noting that
$$H_{t0}=\nabla\times(u_0\times H_0)-\nu\nabla\times(\nabla\times H_0),$$
we have
\begin{align*}
&\|H_t\|_{L^2}^{2}+\int_{0}^{T}\|\nabla H_t\|_{L^2}^{2}\dif t\\
\leq~& \|H_{t0}\|_{L^2}^{2}+
C\int_{0}^{T}\|\nabla u_t\|_{L^2}^{2}\dif t+C\tilde{\rho}^{\alpha-\frac{3}{2}+\frac{\delta}{2}}\int_0^T\| \sqrt{\rho}u_t\|_{L^2}\|H_t\|_{L^2}\dif t\\
&+C\left(\tilde{\rho}^{\frac{3}{4}\gamma-\frac{7}{4}+2\delta}+\tilde{\rho}^{\frac{1}{4}\alpha+\frac{3}{8}\gamma+2\delta-\frac{11}{8}}\right)\int_0^T\| \sqrt{\rho}u_t\|_{L^2}^2\dif t+C\tilde{\rho}^{\frac{3}{2}\gamma-\frac{3}{2}+4\delta}\int_{0}^{T}\|H_t\|_{L^2}^2\dif t\\
\leq~& \|H_{t0}\|_{L^2}^{2}+
C\left(\tilde{\rho}^{\alpha-1+\delta}
+\tilde{\rho}^{\frac{3}{2}\alpha+\frac{3}{2}\delta+\frac{3}{8}\gamma-\frac{15}{8}}+\tilde{\rho}^{\alpha+\frac{3}{2}\gamma-\frac{5}{2}+4\delta}+\tilde{\rho}^{\frac{5}{4}\alpha+\frac{9}{8}\gamma+4\delta-\frac{17}{8}}+\tilde{\rho}^{\frac{3}{2}\gamma-\frac{3}{2}+4\delta}\right)\\
\leq~& C_5\tilde{\rho}^{2\alpha-1} \\
\leq~&\tilde{\rho}^{2\alpha-1+\delta},
\end{align*}
provided that $\alpha>\frac{3}{2}\gamma-\frac{3}{2}$ and $\tilde{\rho}\geq M_{32}\triangleq  C_5^{\frac{1}{\delta}}$. Taking $M_3=\max\{M_{31}, M_{32}\}$, we complete the proof.
\end{proof}

\begin{lemma}\label{D:rho}
Assume that \eqref{3.09} holds, then we have 
\begin{equation}\label{2.24}
\sup_{0\leq t\leq T}\left(\|\nabla \rho\|_{L^2}^2+\|\nabla \rho\|_{L^4}^2\right)+\tilde{\rho}^{\gamma-\alpha}\int_0^T\left(\|\nabla \rho\|_{L^2}^2+\|\nabla \rho\|_{L^4}^2\right)\dif t \leq 2\tilde{\rho}^{2+\delta},
\end{equation}
\begin{equation}
\sup_{0\leq t\leq T}\|\rho_0-\tilde{\rho}\|_{L^\infty}\leq\frac{\tilde{\rho}}{3},
\end{equation}
% \begin{align*}
% \sup_{0\leq t\leq T}\|\nabla \rho\|_{L^4}^2+\tilde{\rho}^{\gamma-\alpha}\int_0^T\|\nabla \rho\|_{L^4}^2\dif t \leq \tilde{\rho}^{2+\delta},
% \end{align*}
provided there exists a constant $M_4$, such that $\tilde{\rho}\geq M_4$.
\end{lemma}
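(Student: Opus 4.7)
I would adopt the classical strategy for compressible fluids: differentiate the continuity equation to obtain a transport-type differential inequality for $\|\nabla\rho\|_{L^p}$, $p\in\{2,4\}$; bound the forcing through an $L^p$-elliptic estimate for the momentum equation; and close via Gronwall. The $L^\infty$-bound on $\rho-\tilde\rho$ follows by integrating the transport equation $\frac{D\rho}{Dt}=-\rho\,\text{div}\,u$ along particle trajectories, once $\text{div}\,u\in L^1_t L^\infty_x$ with small norm is secured. Concretely, applying $\nabla$ to $(\ref{1.1})_1$, testing with $|\nabla\rho|^{p-2}\nabla\rho$ for $p=2,4$, and using integration by parts on the convective term should give
\begin{align*}
\frac{d}{dt}\|\nabla\rho\|_{L^p}\leq C\|\nabla u\|_{L^\infty}\|\nabla\rho\|_{L^p}+C\tilde\rho\|\nabla^2 u\|_{L^p};
\end{align*}
for $p=2$ the elliptic bound of Lemma \ref{lem:3.3} applies directly, while for $p=4$ an $L^4$-analogue must be derived.

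\textbf{$L^4$ elliptic estimate for $\nabla^2 u$.} I would rewrite $(\ref{1.1})_2$ in the elliptic form already displayed in the proof of Lemma \ref{lem:3.3} and apply standard $L^4$-elliptic regularity. The driving quantities are: $\|\rho u_t\|_{L^4}$, controlled by Gagliardo--Nirenberg interpolation between $\|\sqrt\rho u_t\|_{L^2}$ (Lemma \ref{lemma:2.5}) and $\|\nabla u_t\|_{L^2}$; the viscosity-gradient term $\sim\tilde\rho^{\alpha-1}\|\nabla\rho\|_{L^4}\|\nabla u\|_{L^\infty}$; the convective term $\|\rho u\cdot\nabla u\|_{L^4}$ handled by Sobolev interpolation; the pressure $\sim\tilde\rho^{\gamma-1}\|\nabla\rho\|_{L^4}$; and the magnetic piece $\|H\cdot\nabla H\|_{L^4}\lesssim\|H\|_{L^\infty}\|\nabla H\|_{L^4}$, which is $\tilde\rho$-\emph{independent} thanks to the uniform $H^1$-control of $H$ provided by Lemmas \ref{lem:2.1} and \ref{lem:3.4}. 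Each term is then re-expressed as an explicit power of $\tilde\rho$ using \eqref{3.09} together with Lemmas \ref{lem:2.1}--\ref{lemma:2.5}.

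\textbf{Closing the Gronwall argument and the $L^\infty$ bound.} Feeding the $L^4$ estimate for $\nabla^2 u$ into the inequality above, and using $\|\nabla u\|_{L^\infty}\lesssim\|\nabla u\|_{L^4}+\|\nabla^2 u\|_{L^4}$ from Sobolev embedding, I would arrive at
\begin{align*}
\frac{d}{dt}\bigl(\|\nabla\rho\|_{L^2}^2+\|\nabla\rho\|_{L^4}^2\bigr)\leq f(t)\bigl(\|\nabla\rho\|_{L^2}^2+\|\nabla\rho\|_{L^4}^2\bigr)+g(t),
\end{align*}
with $\int_0^T f\,dt$ and $\int_0^T g\,dt$ controlled by explicit $\tilde\rho$-powers strictly below $\tilde\rho^{2+\delta}$; Gronwall's inequality then produces the desired supremum bound, and the time-integral bound with gain $\tilde\rho^{\gamma-\alpha}$ comes out automatically from the pressure contribution, since by Lemma \ref{lem:3.3} integrating $\|\nabla^2 u\|_{L^2}^2$ in time is cheaper precisely by this factor. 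For the pointwise density bound I would write
\begin{align*}
\rho(t,x)=\rho_0\bigl(X(0;t,x)\bigr)\exp\left(-\int_0^t\text{div}\,u(X(s;t,x),s)\,ds\right),
\end{align*}
use $\|\text{div}\,u\|_{L^\infty}\lesssim\|\nabla^2 u\|_{L^4}$, bound its time-integral by a negative power of $\tilde\rho$, and combine with the initial pinching $\tfrac34\tilde\rho\leq\rho_0\leq\tfrac54\tilde\rho$ to get $\|\rho-\tilde\rho\|_{L^\infty}\leq\tilde\rho/3$ for $\tilde\rho\geq M_4$.

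\textbf{Main obstacle.} The tight spot is the $L^4$ elliptic step: the viscosity-gradient term is nonlinear in $\nabla\rho$ and produces a self-coupling between $\|\nabla\rho\|_{L^4}$ and $\|\nabla^2 u\|_{L^4}$, so the corresponding contribution must either be absorbed into the left-hand side of the elliptic inequality or bootstrapped using \eqref{3.09}. Simultaneously one must verify that every power of $\tilde\rho$ collected during the estimates remains strictly below $2+\delta$ under the sharp condition $\alpha>\tfrac32\gamma-\tfrac12$ and in the restricted range $0<\delta<\tfrac{1}{16}\min\{\gamma-1,2\alpha+1-3\gamma\}$; tuning these arithmetic constraints so that neither the density estimate nor the exponential along characteristics blows up is the decisive book-keeping of the lemma.
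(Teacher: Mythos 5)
Your proposal misses the one structural idea the paper's proof actually hinges on: the effective viscous flux $F=(2\mu+\lambda)\mathrm{div}\,u-(\mathcal{P}(\rho)-\mathcal{P}(\tilde{\rho}))-\tfrac12|H|^2$. In the paper, after applying $\nabla$ to the continuity equation one substitutes $\mathrm{div}\,u$ through $F$, and the pressure part of $F$ produces the damping term $\tilde{\rho}^{\gamma-\alpha}\|\nabla\rho\|_{L^r}^{r}$ on the \emph{left-hand side} of \eqref{lr}. That damping is indispensable for two reasons. First, it is the only source of the dissipative half of \eqref{2.24}, i.e.\ the $T$-independent bound on $\tilde{\rho}^{\gamma-\alpha}\int_0^T(\|\nabla\rho\|_{L^2}^2+\|\nabla\rho\|_{L^4}^2)\dif t$; your transport inequality $\frac{\dif}{\dif t}\|\nabla\rho\|_{L^p}\leq C\|\nabla u\|_{L^\infty}\|\nabla\rho\|_{L^p}+C\tilde{\rho}\|\nabla^2u\|_{L^p}$ has no damping, so the best you can get for the time integral is a bound proportional to $T$, which is useless for the continuation argument (your remark that the factor $\tilde{\rho}^{\gamma-\alpha}$ ``comes out automatically from the pressure contribution'' via Lemma \ref{lem:3.3} does not survive inspection: that lemma bounds $\int_0^T\|\nabla^2u\|_{L^2}^2\dif t$, not $\int_0^T\|\nabla\rho\|^2\dif t$). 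Second, even the supremum bound does not close in your scheme: the forcing $C\tilde{\rho}\|\nabla^2u\|_{L^p}$ is linear in $\|\nabla^2u\|_{L^p}$, and the available control is only $L^2$ in time, so $\int_0^T\tilde{\rho}\|\nabla^2u\|_{L^p}\dif t\lesssim \tilde{\rho}\,T^{1/2}(\int_0^T\|\nabla^2u\|_{L^p}^2\dif t)^{1/2}$; likewise your Gronwall weight $f(t)=\|\nabla u\|_{L^\infty}$ is only square-integrable in time, so $\int_0^Tf\,\dif t$ grows like $T^{1/2}$. The paper avoids both problems by absorbing the linear terms with Young's inequality against the damping (paying $\tilde{\rho}^{\alpha-\gamma+2}\|\nabla F\|_{L^r}^2$ and $\tilde{\rho}^{2+\alpha+\delta-\gamma}\|\nabla u\|_{L^\infty}^2$, whose time integrals are $T$-uniform by \eqref{f2-1}, \eqref{p0}), and by reserving Gronwall exclusively for the magnetic terms, whose weights $\int_0^T\|\nabla H\|_{L^2}^2\dif t$ and $\int_0^T\|\nabla H\|_{L^2}^{1/2}\|\nabla^2H\|_{L^2}^{3/2}\dif t$ are uniformly bounded \emph{independently of $\tilde{\rho}$ and $T$} by \eqref{3.1-1} and \eqref{3.6} -- this is exactly where the $\tilde{\rho}$-independent magnetic estimates are used.

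The $L^\infty$ bound is also obtained differently, and your route has the same $T$-uniformity defect: integrating along characteristics needs $\int_0^T\|\mathrm{div}\,u\|_{L^\infty}\dif t$ small, but converting the available $L^2$-in-time bounds on $\|\nabla^2u\|_{L^4}$ into $L^1_tL^\infty_x$ again costs a factor $T^{1/2}$, so the exponential along trajectories is not controlled on arbitrary time intervals. The paper sidesteps characteristics entirely and simply interpolates, $\|\rho-\tilde{\rho}\|_{L^\infty}\leq C\|\rho-\tilde{\rho}\|_{L^2}^{1/7}\|\nabla\rho\|_{L^4}^{6/7}\leq C\tilde{\rho}^{\frac{15-\gamma+6\delta}{14}}$, using the energy bound on $G(\rho)$ from Lemma \ref{lem:2.1} (which gives $\|\rho-\tilde{\rho}\|_{L^2}\lesssim\tilde{\rho}^{(3-\gamma)/2}$) together with the just-proved bound on $\|\nabla\rho\|_{L^4}$; since $\gamma>1+6\delta$ this is $\leq\tilde{\rho}/3$ for $\tilde{\rho}$ large. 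Your $L^4$ elliptic estimate for $\nabla^2u$ and the treatment of the self-coupling with $\|\nabla\rho\|_{L^4}$ are in the right spirit (they correspond to \eqref{f2-11}), but without the flux decomposition and the damping term the argument as proposed cannot deliver \eqref{2.24}.
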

\begin{proof}
Applying $\nabla$ operator on $(\ref{1.1})_1$ and multiplying $\nabla \rho^{r-1}$ on both sides of the resulting equation, we obtain
\begin{align}\label{lr}
\frac{\dif}{\dif t}\|\nabla\rho\|_{L^r}^{r}+\tilde{\rho}^{\gamma-\alpha}\|\nabla\rho\|_{L^r}^{r}\leq C\|\nabla u\|_{L^{\infty}}\|\nabla \rho\|_{L^r}^{r}+C\tilde{\rho}\|\nabla F\|_{L^r}\|\nabla\rho\|_{L^r}^{r-1}+C\tilde{\rho}\|H\cdot\nabla H\|_{L^r}\|\nabla\rho\|_{L^r}^{r-1},
\end{align}
where $r\geq 2$, and $\displaystyle F=(2\mu+\lambda)\text{div}u- (\mathcal{P}(\rho)-\mathcal{P}(\tilde{\rho}))-\frac{1}{2}|H|^2$ is the effective viscous flux.

When $r=2$, (\ref{lr}) reduces to 
\begin{align}\label{l2}
&\frac{\dif}{\dif t}\|\nabla\rho\|_{L^2}^2+\tilde{\rho}^{\gamma-\alpha}\|\nabla\rho\|_{L^2}^2\nonumber\\
\leq~& C\left(\|\nabla u\|_{L^{\infty}}\|\nabla \rho\|_{L^2}^2+\tilde{\rho}\|\nabla F\|_{L^2}\|\nabla\rho\|_{L^2}+\tilde{\rho}\|H\cdot\nabla H\|_{L^2}\|\nabla\rho\|_{L^2}\right)\nonumber\\
\leq~& \frac{1}{2}\tilde{\rho}^{\gamma-\alpha}\|\nabla\rho\|_{L^2}^2+C\left(\tilde{\rho}^{2+\alpha+\delta-\gamma}\|\nabla u\|_{L^{\infty}}^2+\tilde{\rho}^{\alpha-\gamma+2}\|\nabla F\|_{L^2}^2+\tilde{\rho}\|H\|_{L^\infty}\|\nabla H\|_{L^2}\|\nabla\rho\|_{L^2}\right)\nonumber\\
\leq~& \frac{1}{2}\tilde{\rho}^{\gamma-\alpha}\|\nabla\rho\|_{L^2}^2+C\left(\tilde{\rho}^{2+\alpha+\delta-\gamma}\|\nabla u\|_{L^2}^\frac{2}{7}\|\nabla^2 u\|_{L^{4}}^\frac{12}{7}+\tilde{\rho}^{\alpha-\gamma+2}\|\nabla F\|_{L^2}^2+\tilde{\rho}\|\nabla H\|_{L^2}^\frac{3}{2}\|\nabla^2 H\|_{L^{2}}^\frac{1}{2}\|\nabla\rho\|_{L^2}\right).
\end{align}

In order to estimate $\|\nabla F\|_{L^r}$, we note that 
$$-\Delta F=\text{div}g,$$
where
$$g:=\rho^{-\alpha}\left(-\rho u_t -\rho u\cdot \nabla u+ 2\nabla \rho^\alpha\cdot \mathcal {D}(u)+\nabla \rho^\alpha\cdot \text{div} u\mathbb{I}_3+H\cdot\nabla H-\frac{1}{2}\nabla |H|^2\right).$$
Then, we have
\begin{align}\label{f2}
&\|\nabla F\|_{L^2}\leq\|g\|_{L^2}\nonumber\\
\leq~& \frac{C}{\tilde{\rho}^{\alpha-\frac{1}{2}}}\|\sqrt{\rho}u_{t}\|_{L^{2}}
+\frac{C}{\tilde{\rho}^{\alpha-1}}\|u \|_{L^{6}}\|\nabla u \|_{L^{3}}
+\frac{C}{\tilde{\rho}}\|\nabla \rho \|_{L^{4}}\|\nabla u \|_{L^{4}}
+\frac{C}{\tilde{\rho}^{\alpha }}\|H\|_{L^{6}}\|\nabla H \|_{L^{3}}\nonumber\\
\leq~& \frac{C}{\tilde{\rho}^{\alpha-\frac{1}{2}}}\|\sqrt{\rho}u_{t}\|_{L^{2}}
+\frac{C}{\tilde{\rho}^{\alpha-1}}\|\nabla u \|_{L^{2}}^\frac{3}{2}\|\nabla^2 u \|_{L^{2}}^\frac{1}{2}
+C\tilde{\rho}^{\frac{\delta}{2}}\|\nabla u \|_{L^{2}}^\frac{1}{4}\|\nabla^2 u \|_{L^{2}}^\frac{3}{4}
+\frac{C}{\tilde{\rho}^{\alpha }}\|\nabla H\|_{L^{2}}^\frac{3}{2}\|\nabla^2 H \|_{L^{2}}^\frac{1}{2}.
\end{align}
It remains to estimate $\|\nabla^2 u\|_{L^{4}}$ on the right-hand side of (\ref{l2}). From $(\ref{1.1})_2$, one has
\begin{align}\label{f2-11}
\|\nabla^{2} u \|_{L^{4}}\leq~&\frac{C}{\tilde{\rho}^{\alpha}}
\left(\tilde{\rho}\|u_t\|_{L^4}+\tilde{\rho}\|u\cdot\nabla u\|_{L^4}+\tilde{\rho}^{\alpha-1}\|\nabla\rho\nabla u\|_{L^4}
+ \|H\cdot\nabla H\|_{L^4}\right)+\frac{C}{\tilde{\rho}^{\alpha+1-\gamma}}\|\nabla\rho\|_{L^4}\nonumber\\
\leq~&\frac{C}{\tilde{\rho}^{\alpha-1}}\|u_t\|_{L^4}
+\frac{C}{\tilde{\rho}^{\alpha+1-\gamma}}\|\nabla\rho\|_{L^4}
+\frac{C}{\tilde{\rho}^{\alpha-1}}\|u \|_{L^\infty}\|\nabla u \|_{L^4}+\frac{C}{\tilde{\rho} }\|\nabla\rho \|_{L^4}\|\nabla u\|_{L^\infty}\nonumber\\
&+\frac{C}{\tilde{\rho}^{\alpha }}\|H \|_{L^\infty} \|\nabla H \|_{L^4}\nonumber\\
\leq~&\frac{C}{\tilde{\rho}^{\alpha-1}}\|u_t\|_{L^4}
+\frac{C}{\tilde{\rho}^{\alpha+1-\gamma}}\|\nabla\rho\|_{L^4}
+\frac{C}{\tilde{\rho}^{\alpha-1}}
\|\nabla u \|_{L^{2}}^{\frac{3}{4}}
\|\nabla^{2} u \|_{L^{2}}^{\frac{5}{4}}\nonumber\\
&+\frac{C}{\tilde{\rho} }\|\nabla \rho \|_{L^4}
\|\nabla u \|_{L^{2}}^{\frac{1}{7}}
\|\nabla^{2} u \|_{L^{4}}^{\frac{6}{7}}
+\frac{C}{\tilde{\rho}^{\alpha }} \|\nabla H \|_{L^{2}}^{\frac{3}{4}}
\|\nabla^{2} H \|_{L^{2}}^{\frac{5}{4}}\nonumber\\
\leq~&\frac{1}{2}\|\nabla^{2} u \|_{L^{4}}+\frac{C}{\tilde{\rho}^{\alpha-1}}\|u_t\|_{L^2}^{\frac{1}{4}}\|\nabla u_t\|_{L^2}^{\frac{3}{4}}
+\frac{C}{\tilde{\rho}^{\alpha+1-\gamma}}\|\nabla\rho\|_{L^4}
+\frac{C}{\tilde{\rho}^{\alpha-1}}
\|\nabla u \|_{L^{2}}^{\frac{3}{4}}
\|\nabla^{2} u \|_{L^{2}}^{\frac{5}{4}}\nonumber\\
&+\frac{C}{\tilde{\rho}^7}\|\nabla \rho \|_{L^4}^7
\|\nabla u \|_{L^{2}}
+\frac{C}{\tilde{\rho}^{\alpha }} \|\nabla H \|_{L^{2}}^{\frac{3}{4}}
\|\nabla^{2} H \|_{L^{2}}^{\frac{5}{4}}.
\end{align}

% From \eqref{3.9}, we have
% \begin{align*}
% \|\nabla^2 H\|_{L^2}^2\leq C\tilde{\rho}^{2\alpha-1+\delta}, \quad \int_0^T\|\nabla^2 H\|_{L^2}^2\dif t\leq C.
% \end{align*}

Inserting the above inequalities (\ref{f2}) and (\ref{f2-11}) into (\ref{l2}), and integrating the resulting inequality over $(0,T)$, we have
\begin{align}\label{l2-1}
&\|\nabla\rho\|_{L^2}^2+\tilde{\rho}^{\gamma-\alpha}\int_{0}^{T}\|\nabla\rho\|_{L^2}^2\dif t\nonumber\\
\leq~& \|\nabla\rho_0\|_{L^2}^2+C\tilde{\rho}^{2+\alpha+\delta-\gamma}\int_{0}^{T}\|\nabla u\|_{L^2}^\frac{2}{7}\|\nabla^2 u\|_{L^{4}}^\frac{12}{7}\dif t+C\tilde{\rho}^{\alpha-\gamma+2}\int_{0}^{T}\|\nabla F\|_{L^2}^2\dif t\nonumber\\
&+C\tilde{\rho}^2\int_0^T\|\nabla H\|_{L^2}\|\nabla^2 H\|_{L^2}\dif t+\int_{0}^{T}\|\nabla H\|_{L^2}^2\|\nabla\rho\|_{L^2}^2\dif t\nonumber\\
\leq~& \|\nabla\rho_0\|_{L^2}^2+C\tilde{\rho}^{2+\alpha+\delta-\gamma}\left(\int_{0}^{T}\|\nabla u\|_{L^2}^2\dif t\right)^\frac{1}{7}\left(\int_{0}^{T}\|\nabla^2 u\|_{L^{4}}^2\dif t\right)^\frac{6}{7}+C\tilde{\rho}^{\alpha-\gamma+2}\int_{0}^{T}\|\nabla F\|_{L^2}^2\dif t\nonumber\\
&+C\tilde{\rho}^2+\int_{0}^{T}\|\nabla H\|_{L^2}^2\|\nabla\rho\|_{L^2}^2\dif t\nonumber\\
\leq~& \|\nabla\rho_0\|_{L^2}^2+C\tilde{\rho}^{\frac{15}{7}-\frac{\gamma}{7}+\frac{13}{7}\delta}+C\tilde{\rho}^{\frac{9}{4}-\frac{\gamma}{4}+2\delta}+C\tilde{\rho}^2+\int_{0}^{T}\|\nabla H\|_{L^2}^2\|\nabla\rho\|_{L^2}^2\dif t,
%&\leq 2\tilde{\rho}^{2+\delta},
\end{align}
where we have used 
\begin{align}\label{p0}
\int_{0}^{T}\|\nabla^2 u \|_{L^4}^{2}\dif t
\leq~&
\frac{C}{\tilde{\rho}^{2\alpha-2}}\int_{0}^{T}\|u_t\|_{L^2}^{\frac{1}{2}}\|\nabla u_t\|_{L^2}^{\frac{3}{2}}\dif t
+\frac{C}{\tilde{\rho}^{2\alpha+2-2\gamma}}\int_{0}^{T}\|\nabla\rho\|_{L^4}^2\dif t
+\frac{C}{\tilde{\rho}^{2\alpha-2}}
\int_{0}^{T}\|\nabla u \|_{L^{2}}^{\frac{3}{2}}
\|\nabla^{2} u \|_{L^{2}}^{\frac{5}{2}}\dif t\nonumber\\
&+\frac{C}{\tilde{\rho}^{14}}\int_{0}^{T}\|\nabla \rho \|_{L^4}^{14}
\|\nabla u \|_{L^{2}}^2\dif t
+\frac{C}{\tilde{\rho}^{2\alpha }} \int_{0}^{T}\|\nabla H \|_{L^{2}}^{\frac{3}{2}}
\|\nabla^{2} H \|_{L^{2}}^{\frac{5}{2}}\dif t\nonumber\\
\leq~&
\frac{C}{\tilde{\rho}^{2\alpha-\frac{7}{4}}}\int_{0}^{T}
\|\sqrt{\rho}u_t\|_{L^2}^{\frac{1}{2}}\|\nabla u_t\|_{L^2}^{\frac{3}{2}}\dif t
+\frac{C}{\tilde{\rho}^{\alpha-\gamma-\delta}}
+\frac{C}{\tilde{\rho}^{2\alpha-\frac{5}{4}-\frac{3}{4}\gamma-3\delta}}
\int_{0}^{T}
\|\nabla^{2} u \|_{L^{2}}^2\dif t\nonumber\\
&+\frac{C}{\tilde{\rho}^{\alpha-1-7\delta}}
+\frac{C}{\tilde{\rho}^{\frac{3}{2}\alpha+\frac{1}{4}-\frac{1}{4}\delta}}\nonumber\\
\leq~&
\frac{C}{\tilde{\rho}^{\alpha-\frac{3}{16}\gamma-\frac{13}{16}-\frac{5}{4}\delta}}
+\frac{C}{\tilde{\rho}^{\alpha-\gamma-\delta}}
+\frac{C}{\tilde{\rho}^{3\alpha-\frac{5}{4}-\frac{7}{4}\gamma-4\delta}}
+\frac{C}{\tilde{\rho}^{\alpha-1-7\delta}}
+\frac{C}{\tilde{\rho}^{\frac{3}{2}\alpha+\frac{1}{4}-\frac{\delta}{4}}}\nonumber\\
\leq~&
\frac{C}{\tilde{\rho}^{\alpha-\gamma-\delta}},
\end{align}
provided that $\alpha>\frac{3}{8}\gamma+\frac{5}{8}$, and 
\begin{align}\label{f2-1}
\int_{0}^{T}\|\nabla F\|_{L^2}^2\dif t
\leq~& \frac{C}{\tilde{\rho}^{2\alpha-1}}\int_{0}^{T}\|\sqrt{\rho}u_{t}\|_{L^{2}}^2\dif t
+\frac{C}{\tilde{\rho}^{2\alpha-2}}\int_{0}^{T}\|\nabla u \|_{L^{2}}^3\|\nabla^2 u \|_{L^{2}}\dif t\nonumber\\
&+C\tilde{\rho}^{\delta}\int_{0}^{T}\|\nabla u \|_{L^{2}}^\frac{1}{2}\|\nabla^2 u \|_{L^{2}}^\frac{3}{2}\dif t
+\frac{C}{\tilde{\rho}^{2\alpha }}\int_{0}^{T}\|\nabla H\|_{L^{2}}^3\|\nabla^2 H \|_{L^{2}}\dif t\nonumber\\
\leq~& \frac{C}{\tilde{\rho}^{\alpha-\frac{3}{4}\gamma-\frac{1}{4}-2\delta}}+\frac{C}{\tilde{\rho}^{3\alpha-\frac{5}{4}\gamma-\frac{7}{4}-\frac{5}{2}\delta}}+\frac{C}{\tilde{\rho}^{\alpha-\frac{3}{4}\gamma-\frac{1}{4}-\frac{7}{4}\delta}}+\frac{C}{\tilde{\rho}^{2\alpha}}\nonumber\\
\leq~& \frac{C}{\tilde{\rho}^{\alpha-\frac{3}{4}\gamma-\frac{1}{4}-2\delta}},
\end{align}
provided that $\alpha>\frac{1}{4}\gamma+\frac{3}{4}$. Then, by using Gronwall's inequality, from (\ref{l2-1}) and (\ref{3.1-1}), we have
\begin{align}\label{l2-2}
&\|\nabla\rho\|_{L^2}^2+\tilde{\rho}^{\gamma-\alpha}\int_{0}^{T}\|\nabla\rho\|_{L^2}^2\dif t\nonumber\\
\leq~& \left(\|\nabla\rho_0\|_{L^2}^2+C\tilde{\rho}^{\frac{15}{7}-\frac{1}{7}\gamma+\frac{13}{7}\delta}+C\tilde{\rho}^{\frac{9}{4}-\frac{1}{4}\gamma+\frac{7}{4}\delta}+C\tilde{\rho}^2\right)\exp\left\{C_6\int_{0}^{T}\|\nabla H\|_{L^2}^2\dif t\right\}\leq\tilde{\rho}^{2+\delta},
%&\leq 2\tilde{\rho}^{2+\delta},
\end{align}
provided that $\tilde{\rho}\geq M_{41}\triangleq(2C_6\exp\{\frac{2C_6}{\nu}\|\nabla H_0\|_{L^2}^2\})^{\frac{1}{\delta}}$.

When $r=4$, (\ref{lr}) reduces to 
\begin{align}\label{l4}
&\frac{\dif}{\dif t}\|\nabla\rho\|_{L^4}^2+\tilde{\rho}^{\gamma-\alpha}\|\nabla\rho\|_{L^4}^2\nonumber\\
\leq~& C\|\nabla u\|_{L^{\infty}}\|\nabla \rho\|_{L^4}^2+C\tilde{\rho}\|\nabla F\|_{L^4}\|\nabla\rho\|_{L^4}+C\tilde{\rho}\|H\cdot\nabla H\|_{L^4}\|\nabla\rho\|_{L^4}\nonumber\\
\leq~& \frac{1}{2}\tilde{\rho}^{\gamma-\alpha}\|\nabla\rho\|_{L^4}^2+C\tilde{\rho}^{\alpha-\gamma}\|\nabla\rho\|_{L^4}^2\|\nabla u\|_{L^{\infty}}^2+C\tilde{\rho}^{\alpha-\gamma+2}\|\nabla F\|_{L^4}^2+C\tilde{\rho}\|H\|_{L^\infty}\|\nabla H\|_{L^4}\|\nabla\rho\|_{L^4}\nonumber\\
\leq~& \frac{1}{2}\tilde{\rho}^{\gamma-\alpha}\|\nabla\rho\|_{L^4}^2+C\tilde{\rho}^{\alpha-\gamma+2+\delta}\|\nabla u\|_{L^2}^\frac{2}{7}\|\nabla^2 u\|_{L^{4}}^\frac{12}{7}+C\tilde{\rho}^{\alpha-\gamma+2}\|\nabla F\|_{L^4}^2+C\tilde{\rho}\|\nabla H\|_{L^2}^\frac{3}{4}\|\nabla^2 H\|_{L^{2}}^\frac{5}{4}\|\nabla\rho\|_{L^4},
%&\leq \frac{1}{2}\tilde{\rho}^{\gamma-\alpha}\|\nabla\rho\|_{L^4}^2+C\tilde{\rho}^{1-\delta}\|\nabla u\|_{L^2}^\frac{2}{7}\|\nabla^2 u\|_{L^{4}}^\frac{12}{7}+\frac{C}{\tilde{\rho}^{\alpha+\gamma-4}}\|u_{t}\|_{L^{2}}^\frac{1}{2}\|\nabla u_{t}\|_{L^{2}}^\frac{3}{2}
%+\frac{C}{\tilde{\rho}^{\alpha+\gamma-4}}\|\nabla u\|_{L^2}^\frac{2}{7}\|\nabla u\|_{L^{4}}^\frac{12}{7}\nonumber\\
%&\quad\quad+C\tilde{\rho}^{1-\delta}\|\nabla u\|_{L^2}^\frac{2}{7}\|\nabla u\|_{L^{4}}^\frac{12}{7}
%+C\tilde{\rho}^{\alpha-\gamma+\frac{9}{4}}\|\nabla H\|_{L^2}^\frac{2}{7}\|\nabla H\|_{L^{4}}^\frac{12}{7},
\end{align}
where $\|\nabla F\|_{L^4}$ can be estimated as 
\begin{align}\label{f4}
&\|\nabla F\|_{L^4}\leq \|g\|_{L^4}\nonumber\\
\leq~&\frac{C}{\tilde{\rho}^{\alpha-1}}\|u_{t}\|_{L^{4}}
+\frac{C}{\tilde{\rho}^{\alpha-1}}\|u \|_{L^{\infty}}\|\nabla u \|_{L^{4}}
+\frac{C}{\tilde{\rho}}\|\nabla \rho \|_{L^{4}}\|\nabla u \|_{L^{\infty}}
+\frac{C}{\tilde{\rho}^{\alpha }}\|H\|_{L^{\infty}}\|\nabla H \|_{L^{4}}\nonumber\\
\leq~&\frac{C}{\tilde{\rho}^{\alpha-1}}\|u_{t}\|_{L^{2}}^\frac{1}{4}\|\nabla u_{t}\|_{L^{2}}^\frac{3}{4}
+\frac{C}{\tilde{\rho}^{\alpha-1}}\|\nabla u\|_{L^2}^\frac{3}{4}\|\nabla^2 u\|_{L^{2}}^\frac{5}{4}
+C\tilde{\rho}^{\frac{\delta}{2}}\|\nabla u \|_{L^{\infty}}
+\frac{C}{\tilde{\rho}^{\alpha}}\|\nabla H\|_{L^2}^\frac{3}{4}\|\nabla^2 H\|_{L^{2}}^\frac{5}{4}\nonumber\\
\leq~&\frac{C}{\tilde{\rho}^{\alpha-\frac{7}{8}}}\|\sqrt{\rho}u_{t}\|_{L^{2}}^\frac{1}{4}\|\nabla u_{t}\|_{L^{2}}^\frac{3}{4}
+\frac{C}{\tilde{\rho}^{\alpha-1}}\|\nabla u\|_{L^2}^\frac{3}{4}\|\nabla^2 u\|_{L^{2}}^\frac{5}{4}
+C\tilde{\rho}^{\frac{\delta}{2}}\|\nabla u\|_{L^2}^\frac{1}{7}\|\nabla^2 u\|_{L^{4}}^\frac{6}{7}+\frac{C}{\tilde{\rho}^{\alpha}}\|\nabla H\|_{L^2}^\frac{3}{4}\|\nabla^2 H\|_{L^{2}}^\frac{5}{4}.
\end{align}

Then, integrating (\ref{l4}) over $(0,T)$, we have
\begin{align}\label{l4-1}
&\|\nabla\rho\|_{L^4}^2+\tilde{\rho}^{\gamma-\alpha}\int_0^T\|\nabla\rho\|_{L^4}^2\dif t\nonumber\\
\leq~& \|\nabla\rho_0\|_{L^4}^2+C\tilde{\rho}^{\alpha-\gamma+2+\delta}\int_0^T\|\nabla u\|_{L^2}^\frac{2}{7}\|\nabla^2 u\|_{L^{4}}^\frac{12}{7}\dif t+C\tilde{\rho}\int_0^T\|\nabla H\|_{L^2}^\frac{3}{4}\|\nabla^2 H\|_{L^{2}}^\frac{5}{4}\|\nabla\rho\|_{L^4}\dif t\nonumber\\
&+\tilde{\rho}^{\alpha-\gamma+2}\int_0^T\bigg(\frac{C}{\tilde{\rho}^{2\alpha-\frac{7}{4}}}\|\sqrt{\rho}u_{t}\|_{L^{2}}^\frac{1}{2}\|\nabla u_{t}\|_{L^{2}}^\frac{3}{2}
+\frac{C}{\tilde{\rho}^{2\alpha-2}}\|\nabla u\|_{L^2}^\frac{3}{2}\|\nabla^2 u\|_{L^{2}}^\frac{5}{2}
+C\tilde{\rho}^{\delta}\|\nabla u\|_{L^2}^\frac{2}{7}\|\nabla^2 u\|_{L^{4}}^\frac{12}{7}\nonumber\\
&+\frac{C}{\tilde{\rho}^{2\alpha}}\|\nabla H\|_{L^2}^\frac{3}{2}\|\nabla^2 H\|_{L^{2}}^\frac{5}{2}\bigg)\dif t\nonumber\\
\leq~& \|\nabla\rho_0\|_{L^4}^2+C\tilde{\rho}^{\alpha-\gamma+2+\delta}\left(\int_0^T\|\nabla u\|_{L^2}^2\dif t\right)^\frac{1}{7}\left(\int_0^T\|\nabla^2 u\|_{L^{4}}^2\dif t\right)^\frac{6}{7}+C\tilde{\rho}^2\int_0^T\|\nabla H\|_{L^2}\|\nabla^2 H\|_{L^{2}}\dif t\nonumber\\
&+C\tilde{\rho}^{\frac{15}{4}-\alpha-\gamma}\left(\int_0^T\|\sqrt{\rho}u_{t}\|_{L^{2}}^2\dif t\right)^\frac{1}{4}\left(\int_0^T\|\nabla u_{t}\|_{L^{2}}^2\dif t\right)^\frac{3}{4}+C\tilde{\rho}^{\frac{13}{4}-\alpha-\frac{1}{4}\gamma+3\delta}\int_0^T\|\nabla^2 u\|_{L^2}^2\dif t\nonumber\\
&+C\tilde{\rho}^{\frac{7}{4}-\frac{\alpha}{2}-\gamma+\frac{\delta}{4}}\int_0^T\|\nabla^2 H\|_{L^2}^2\dif t+\int_0^T\|\nabla H\|_{L^2}^\frac{1}{2}\|\nabla^2 H\|_{L^{2}}^\frac{3}{2}\|\nabla\rho\|_{L^4}^2\dif t\nonumber\\
\leq~& \|\nabla\rho_0\|_{L^4}^2+C\tilde{\rho}^{\frac{15}{7}-\frac{1}{7}\gamma+\frac{13}{7}\delta}+C\tilde{\rho}^2+C\tilde{\rho}^{\frac{45}{16}-\frac{13}{16}\gamma+\frac{5}{4}\delta}+C\tilde{\rho}^{\frac{13}{4}-2\alpha+\frac{3}{4}\gamma+4\delta}+C\tilde{\rho}^{\frac{7}{4}-\frac{\alpha}{2}-\gamma+\frac{\delta}{4}}\nonumber\\
&+\int_0^T\|\nabla H\|_{L^2}^\frac{1}{2}\|\nabla^2 H\|_{L^{2}}^\frac{3}{2}\|\nabla\rho\|_{L^4}^2\dif t,
\end{align}
%provided that $\alpha\leq\gamma-\frac{3}{8}\delta$, $\alpha\leq\gamma+\frac{5}{8}-\frac{9}{28}\delta$, $\frac{9}{14}\delta\leq2\gamma-\frac{1}{2}$.
which together with Gronwall's inequality and (\ref{3.6}) implies
\begin{align}\label{l4-2}
\|\nabla\rho\|_{L^4}^2+\tilde{\rho}^{\gamma-\alpha}\int_0^T\|\nabla\rho\|_{L^4}^2\dif t
\leq~& \bigg(\|\nabla\rho_0\|_{L^4}^2+C\tilde{\rho}^{\frac{15}{7}-\frac{1}{7}\gamma+\frac{13}{7}\delta}+C\tilde{\rho}^2+C\tilde{\rho}^{\frac{45}{16}-\frac{13}{16}\gamma+\frac{5}{4}\delta}+C\tilde{\rho}^{\frac{13}{4}-2\alpha+\frac{3}{4}\gamma+4\delta}\nonumber\\
&+C\tilde{\rho}^{\frac{7}{4}-\frac{1}{2}\alpha-\gamma+\frac{1}{4}\delta}\bigg)\exp\left\{\int_{0}^{T}\|\nabla H\|_{L^2}^\frac{1}{2}\|\nabla^2 H\|_{L^{2}}^\frac{3}{2}\dif t\right\}\nonumber\\
\leq~&\tilde{\rho}^{2+\delta},
\end{align}
provided that $\alpha>\frac{3}{8}\gamma+\frac{5}{8}$. Combining \eqref{l2-2} and \eqref{l4-2}, 
(\ref{2.24}) can be proved.

Moreover, from (\ref{3.1}), (\ref{2.24}) and interpolation inequality, we have
$$\|\rho-\tilde{\rho}\|_{L^\infty}\leq C\|\rho-\tilde{\rho}\|_{L^2}^{\frac{1}{7}}\|\nabla\rho\|_{L^4}^{\frac{6}{7}}\leq C_7\tilde{\rho}^{\frac{15-\gamma+6\delta}{14}}\leq\frac{\tilde{\rho}}{3},$$
provided that $\tilde{\rho}\geq M_{42}\triangleq(3C_7)^{\frac{14}{\gamma-1-6\delta}}$. Taking $\tilde{\rho}\geq M_4\triangleq\max\{M_{41}, M_{42}\}$, we complete the proof.
\end{proof}

Finally, we take $M=\max\{M_1, M_2, M_3, M_4\}$. Noting \eqref{3.09} and Lemma \ref{lem:3.4}--\ref{D:rho}, we can use a standard continuation argument to show that the local solution can be extended to be a global one. This completes the proof of Theorem \ref{main Thm}.

%\section{Proof of the main theorem}
%\begin{proof}
%According to \cite{MR390528}, there exists a positive time $T_* > 0$ such that the Cauchy problem \eqref{1.1}--\eqref{far} has a strong solution $(\rho,u,H)$ in $\mathbb{R}^3 \times (0, T_*]$. So, to prove Theorem \ref{main Thm}, it suffices to show that the local solution can be extended to be a global one. Hence, we assume that $\tilde{\rho}\geq M$ holds with $M=\max\{M_1, M_2, M_3, M_4\}$. Set
%\begin{align}\label{4.1}
%T^* = \sup\left\{T ~ | ~ (\rho,u,H) ~ \text{is a strong solution on} ~ [0, T]\right\}.
%\end{align}

%We claim that $T^*=\infty$. Otherwise, assume that $T^*<\infty$. By Lemmas \ref{lem:2.1}--\ref{D:rho}, $(\rho,u,H)|_{t=T^*}$ satisfies \eqref{3.09}. Thus, \cite{MR390528} implies that there exists a $T^{**}>T^*$ such that $(\rho,u,H)$ can be extended to a strong solution of \eqref{1.1}--\eqref{far} in $\mathbb{R}^3 \times (0, T^{**}]$, which contradicts \eqref{4.1}. Hence, $T^*=\infty$.
%\end{proof}

\section*{Acknowledgments} 
The research of this work was supported in part by the National Natural Science Foundation
of China under grants 12371221, 12301277, 12161141004 and 11831011. This work was also partially supported by
the Fundamental Research Funds for the Central Universities and Shanghai Frontiers Science
Center of Modern Analysis.

\end{document}